\newtheorem{theo}{Theorem}[section]
\newtheorem{cor}{Corollary}[section]
\newtheorem{claim}{Claim}[section]
\def\S{{\sf S}}
\def\s{{\sf s}}
\def\M{\mathcal{M}}
\def\P{\mathcal{P}}
\title{Basic Packing of Arborescences}
\author{Olivier Durand de Gevigney\thanks {Laboratoire G-SCOP, CNRS, Grenoble INP, UJF,  46, Avenue F\'elix Viallet, Grenoble, France, 38000.} \\ Viet-Hang Nguyen$^*$ \\ Zolt\'an Szigeti$^*$ \footnote{partially supported by the TEOMATRO grant ANR-10-BLAN 0207}}
\begin{document}
\maketitle

\abstract{We provide the directed counterpart of a slight extension of Katoh and Tanigawa's result \cite{Kato_Tanigawa} on rooted-tree decompositions with matroid constraints. Our result characterises digraphs having a packing of arborescences with matroid constraints. It is a proper extension of Edmonds' result \cite{edmonds} on packing of spanning arborescences and implies -- using a general orientation result of Frank \cite{frank} -- the above result of Katoh and Tanigawa.

We also give a complete description of the convex hull of the incidence vectors of the basic packings of arborescences and prove that the mimimum cost version of the problem can be solved in polynomial time.}


\section{Introduction}

Let $G=(V,E)$ be a graph. For a vertex set $X$, $E(X)$ denotes the set of edges of $G$ with both extremities  in $X$.
 We say that $G$ is a \emph{rooted-tree} or more precisely a \emph{tree rooted at $r$} if $G$ is connected and cycle free and $r$ is a vertex of $G.$ We note that a tree rooted at $r$ may consist of only the vertex $r$ and no edges. Note also that  a tree can be  rooted at any vertex of its. 
\medskip

Our starting point is the  result of Tutte \cite{Tutte} and Nash-Williams  \cite{NW} on packing of spanning trees.
For a partition ${\cal P}$ of $V,$ $e_G({\cal P})$ denotes the number of edges of $G$ between the different members of ${\cal P}.$ We always suppose  that the  members of ${\cal P}$ are not empty. Following Frank \cite{frankbook}, $G$ is called \emph{$k$-partition-connected} if 
\begin{eqnarray}
e_G(\P) \geq k(|\P|-1) &&\text{ for every partition  $\P$ of $V.$}\label{tuttecond}
\end{eqnarray}

\begin{theo} [Tutte \cite{Tutte}, Nash-Williams \cite{NW}]\label{tuttetheorem}
A graph $G=(V,E)$ contains $k$ edge-disjoint spanning trees (rooted at a vertex  $r$  of $G$) if and only if  
$G$ is $k$-partition-connected.
\end{theo}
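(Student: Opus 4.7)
\emph{Necessity} is direct: given $k$ edge-disjoint spanning trees $T_1,\dots,T_k$ and any partition $\P=\{V_1,\dots,V_p\}$ of $V$, I contract each block $V_i$ to a single point; each $T_j$ becomes a connected spanning multigraph on $p$ vertices, and therefore uses at least $p-1$ edges crossing blocks of $\P$. Summing over $j$ yields $e_G(\P)\ge k(p-1)$. The prescribed root $r$ plays no role, since (as the excerpt already notes) any tree can be rooted at any of its vertices.

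For \emph{sufficiency} my plan is to invoke Edmonds' matroid union theorem applied to $k$ copies of the cycle matroid $M=M(G)$. A subset of $E$ is independent in the union $kM$ precisely when it decomposes into $k$ forests, so $G$ admits a packing of $k$ edge-disjoint spanning trees if and only if $r_{kM}(E)=k(|V|-1)$. The upper bound $r_{kM}(E)\le k(|V|-1)$ is automatic. The matroid union formula gives
\[
r_{kM}(E)=\min_{F\subseteq E}\bigl(|E\setminus F|+k\,r_M(F)\bigr),
\]
so the task reduces to showing $|E\setminus F|+k\,r_M(F)\ge k(|V|-1)$ for every $F\subseteq E$. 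Let $\P_F$ be the partition of $V$ into the vertex sets of the connected components of $(V,F)$, so that $r_M(F)=|V|-|\P_F|$. Since every edge of $F$ lies inside a block of $\P_F$, the inequality $|E\setminus F|\ge e_G(\P_F)$ holds. Applying the hypothesis~(\ref{tuttecond}) to $\P_F$ then yields
\[
|E\setminus F|+k\,r_M(F)\ge k(|\P_F|-1)+k(|V|-|\P_F|)=k(|V|-1),
\]
as required.

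The main obstacle is the matroid union theorem itself, which is a substantial result and not something one would want to reprove here. A self-contained combinatorial alternative would proceed by induction on $|E|$ or $k$, enlarging a maximal partial packing via an exchange/augmentation argument; the delicate step is to extract, from a stuck configuration, a partition of $V$ violating~(\ref{tuttecond}), by taking components of a suitable auxiliary subgraph built from the exchange graph. Since the paper's contributions extend Tutte--Nash-Williams through matroidal constructions, the matroid-union framework is the natural setting and I would adopt it here.
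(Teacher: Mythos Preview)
Your argument is correct. The paper, however, does not prove \ref{tuttetheorem} via matroid union; it sketches Frank's derivation from Edmonds' arborescence theorem (\ref{edmondstheorem}): condition~(\ref{tuttecond}) is exactly what is needed for $G$ to admit an orientation $D$ with $\rho_D(X)\ge k$ for every nonempty $X\subseteq V\setminus r$, and then \ref{edmondstheorem} yields $k$ arc-disjoint spanning arborescences, which become $k$ edge-disjoint spanning trees once the orientation is forgotten. Both routes rest on a substantial theorem of Edmonds---matroid union in yours, arborescence packing in the paper's---so neither is markedly more elementary. The orientation-then-packing paradigm, though, is precisely the template the paper generalizes to the matroid-constrained setting (\ref{thm:packing_non_oriented} via \ref{orientation} and \ref{packing}); your matroid-union proof is the standard textbook argument, clean in its own right, but it does not set up the machinery the paper actually exploits for its main results.
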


Let $D=(V,A)$ be a digraph. 
For a vertex set $X$, $D[X]$ denotes the induced subgraph of $D$ on $X.$
 We say that a vertex $v$ is \emph{reachable} from a vertex $u$ in $D$ if there exists a directed path from $u$ to $v$ in $D.$ For convenience, we will not distinguish the vertex $v$ from the set $\{v\}.$  
For a vertex set $X$, we denote by $\varrho_D(X)$ the set of arcs entering $X$ and we define $\rho _D(X)=|\varrho_D(X)|$. We say that $D$ is an \emph{arborescence rooted at $r$} if $D$ is a directed tree, $r$ is a vertex of $D$ of in-degree $0$ and all the other vertices of $D$ are of in-degree $1.$ We note that an arborescence rooted at $r$ may consist of only the vertex $r$ and no arcs. Note also that an arborescence has a unique root. It is well-known that $D$ contains a spanning arborescence rooted at a vertex  $r$  of $D$ if and only if  every non-empty vertex set not containing $r$ has in-degree at least $1.$
\medskip

The directed counterpart of \ref{tuttetheorem} is the result of Edmonds \cite{edmonds}  on packing of spanning arborescences. 

\begin{theo}[Edmonds \cite{edmonds}]\label{edmondstheorem}
A digraph $D=(V,A)$ contains $k$ arc-disjoint spanning arborescences rooted  at a vertex  $r$  of $D$ if and only if 
\begin{eqnarray}
\rho _D(X) \geq k &&\text{ for all non-empty  $X\subseteq V\setminus r.$}\label{edmondscond}
\end{eqnarray}
\end{theo}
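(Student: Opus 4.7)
The necessity of~\eqref{edmondscond} is immediate: in any packing of $k$ arc-disjoint spanning $r$-arborescences, each arborescence contains a dipath from $r$ to every vertex of any non-empty $X\subseteq V\setminus r$, and hence contributes at least one arc to $\varrho_D(X)$; summing over the $k$ arborescences gives $\rho_D(X)\geq k$.

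For sufficiency my plan is to induct on $k$. The case $k=1$ is precisely the classical characterisation of digraphs containing a spanning $r$-arborescence recalled just before the statement. For the inductive step it is enough to establish the following \emph{key lemma}: if $D$ satisfies~\eqref{edmondscond}, then there exists a spanning $r$-arborescence $F\subseteq A$ such that $\rho_{D-F}(X)\geq k-1$ for every non-empty $X\subseteq V\setminus r$. Applying the inductive hypothesis to $D-F$ with parameter $k-1$ then yields the remaining $k-1$ arc-disjoint spanning $r$-arborescences.

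To prove the key lemma I would choose $B\subseteq A$ to be the arc set of an $r$-arborescence spanning some vertex set $V_B\supseteq\{r\}$, satisfying $\rho_{D-B}(X)\geq k-1$ for every non-empty $X\subseteq V\setminus r$, and with $|V_B|$ maximum. Call a non-empty $X\subseteq V\setminus r$ \emph{tight} if $\rho_{D-B}(X)=k-1$; the aim is $V_B=V$. Suppose for contradiction that $Z:=V\setminus V_B\neq\emptyset$. Since no arc of $B$ has head in $Z$, one gets $\rho_{D-B}(Z)=\rho_D(Z)\geq k\geq 1$, so some arc $uv\in A$ has $u\in V_B$ and $v\in Z$. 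Maximality of $B$ forces every such arc to be blocked by a tight set $T_{uv}$ with $v\in T_{uv}$ and $u\notin T_{uv}$.

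The main obstacle is to extract a contradiction from these blockers. The key tool is the submodular inequality $\rho_{D-B}(X)+\rho_{D-B}(Y)\geq\rho_{D-B}(X\cup Y)+\rho_{D-B}(X\cap Y)$, which implies that the family of tight sets is closed under intersection (when the intersection is non-empty) and under union (when the intersection is non-empty). I would pick $T^*$ inclusion-minimal among tight sets meeting $Z$. If $T^*\subseteq Z$, then no arc of $B$ enters $T^*$, giving the direct contradiction $\rho_D(T^*)=\rho_{D-B}(T^*)=k-1<k$. Otherwise $T^*$ meets both $V_B$ and $Z$, and the crux is to exhibit an arc $uv\in A$ with $u\in T^*\cap V_B$ and $v\in T^*\cap Z$; such an arc exists by a double count using $\rho_D(T^*\cap Z)\geq k$, $\rho_{D-B}(T^*)=k-1$, and the fact that the arcs of $B$ entering $T^*$ all have their heads in $T^*\cap V_B$. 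For such an arc the blocker $T_{uv}$ satisfies $u\in T^*\setminus T_{uv}$, so $T_{uv}\cap T^*$ is a proper tight subset of $T^*$ still meeting $Z$, contradicting the minimality of $T^*$. The double-counting step that produces this critical arc, together with the uncrossing that then shrinks $T^*$, is the technical heart of the argument.
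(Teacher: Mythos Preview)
Your argument is correct; it is essentially Lov\'asz's classical simplification of Edmonds' original proof. The double count you sketch works cleanly: since no arc of $B$ has its head in $Z$, every arc from $V\setminus T^*$ into $T^*\cap Z$ lies in $D-B$ and is therefore one of the at most $k-1$ arcs of $D-B$ entering $T^*$; together with $\rho_D(T^*\cap Z)\ge k$ this forces at least one arc from $T^*\cap V_B$ to $T^*\cap Z$, and uncrossing its blocker $T_{uv}$ with $T^*$ produces a strictly smaller tight set still meeting $Z$, contradicting the minimality of $T^*$.

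The paper, however, does not give a stand-alone proof of \ref{edmondstheorem}; it is quoted as a known result and then recovered as the special case of \ref{packing} in which $\M$ is the free matroid on a $k$-element set $\S$ and $\pi$ places all of $\S$ at $r$ (so that $\M$-connectedness is exactly \eqref{edmondscond}). The proof of \ref{packing} uses a different induction scheme from yours: rather than peeling off an entire spanning arborescence and inducting on $k$, it inducts on $|A|$, deletes a single ``bad'' arc $uv$, and compensates by placing at $v$ a new root parallel in $\M$ to some $\s\in\S_u\setminus{\rm Span}_\M(\S_v)$. Your route is shorter and more elementary for Edmonds' theorem in isolation; the paper's arc-by-arc induction is what allows the matroid constraint to be carried through.
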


Frank \cite{frank1} showed how to deduce \ref{tuttetheorem} from \ref{edmondstheorem}. He proved that (\ref{tuttecond}) is the necessary and sufficient  condition for the undirected graph $G$ to have an orientation $D$ that satisfies (\ref{edmondscond}). Then, by \ref{edmondstheorem}, $D$ contains $k$ arc-disjoint spanning arborescences rooted at  $r$ that provide the $k$ edge-disjoint spanning trees rooted at  $r$ in $G.$
\medskip

Let $\S=\{\s_1,\ldots, \s_t\}$ be a set and $\pi$ a map from $\S$ to $V$. We may think of $\pi$ as a placement of the elements of $\S$ at vertices of $V$ and different elements of $\S$ may be placed at the same vertex. In this paper $t$ will always denote the size of $\S.$ 
The triplet $(G,\S,\pi)$ (respectively $(D,\S,\pi)$) is called a \emph{graph} (resp. \emph{digraph}) \emph{with roots}.
For $X \subseteq V$, we denote by $\S_X$ the set $\pi^{-1}(X)$.
\medskip

A function  $p:2^\Omega \rightarrow \mathbb Z$  is called \emph{supermodular} (respectively \emph{intersecting supermodular}) if for all $X,Y\subseteq \Omega$ (resp. for all $X,Y\subseteq \Omega$ that are {intersecting}),
\begin{eqnarray*}
p(X)+p(Y) & \leq  & p(X\cap Y)+p(X\cup Y).\label{alf2}
\end{eqnarray*}
 A function $b:2^\Omega \rightarrow \mathbb Z$ is called \emph{submodular} if $-b$ is supermodular. Note that the in-degree function $\rho_D$ of a digraph $D$  is submodular.
\medskip

Let $\M$ be a matroid on $\S$ with  rank function  $r_{\M}$. It is well-known that $r_{\M}$ is monotone non-decreasing and submodular. A set ${\sf Q}\subseteq {\S}$ is \emph{independent}  if $r_{\M}({\sf Q})=|{\sf Q}|$. Recall that  every subset of an independent set is independent. A maximal independent set is a \emph{base} of $\M.$ Each base has the same size, namely $r_{\M}({\S}).$ $\M$ is called a \emph{free matroid} if each subset of $\S$ is independent.
For a set ${\sf Q}\subseteq {\S},$ we define ${\rm Span}_{\M}({\sf Q})=\{{\s}\in {\S}:r_{\M}({\sf Q}\cup \{{\s}\})=r_{\M}({\sf Q})\}.$ The set ${\sf Q}$ is called a \emph{spanning set} of $\M$ if ${\rm Span}_{\M}({\sf Q})={\S}.$
\medskip

The following definition was introduced by Katoh and Tanigawa \cite{Kato_Tanigawa}. An \emph{$\M$-basic packing of rooted-trees} is a set $\{T_1,\dots ,T_t\}$ of  pairwise edge-disjoint trees  such that for  $i=1,\dots ,t$, $T_i$ is rooted at $\pi({\s}_i)$ and, for each $v \in V$, the set $\{{\s}_i \in {\S} : v \in V(T_i)\}$ forms a base of $\M$. For the sake of convenience, we say that $T_i$ is rooted at ${\s}_i$. Note that the trees are not necessarily spanning and each vertex of $G$ belongs to  exactly $r_{\M}(\S)$ trees.
\medskip

The following result characterizes graphs with roots that have  a basic packing of rooted-trees. 
It will be derived from its directed counterpart (\ref{packing}) at the end of this section.
We say that the map $\pi$ is \emph{$\M$-independent} if ${\S}_v$ is independent in $\M$ for all $v \in V.$
The graph with roots $(G,{\S},\pi)$ is called \emph{$\M$-partition-connected} if 
\begin{eqnarray*}
 e_G(\P) \geq r_\M({\S})|\P| - \sum_{X \in \P}r_\M({\S}_X) && \text{ for every partition  $\P$ of $V.$}\label{ptcond2}
\end{eqnarray*}
\begin{theo} \label{thm:packing_non_oriented}
 Let $(G,{\S},\pi)$ be a graph with roots and  $\M$  a matroid on ${\S}$. 
 There exists an $\M$-basic packing of rooted-trees in $(G,{\S},\pi)$ if and only if $\pi$ is $\M$-independent and $(G,{\S},\pi)$ is $\M$-partition-connected.
\end{theo}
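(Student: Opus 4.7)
The plan is to derive this theorem from its directed counterpart Theorem \ref{packing} via an orientation argument, mirroring Frank's deduction \cite{frank1} of Theorem \ref{tuttetheorem} from Theorem \ref{edmondstheorem}.

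For \emph{necessity}, given an $\M$-basic packing of rooted-trees in $(G,\S,\pi)$, I orient each $T_i$ away from its root to obtain an $\M$-basic packing of arborescences in an orientation $D$ of a subgraph of $G$. Applying the necessity direction of Theorem \ref{packing} to $(D,\S,\pi)$ yields that $\pi$ is $\M$-independent and that $\rho_D(X) \geq r_\M(\S) - r_\M(\S_X)$ for every non-empty $X \subseteq V$. Summing the latter over the members of any partition $\P$ of $V$ and using $\sum_{X \in \P} \rho_D(X) \leq e_G(\P)$ gives exactly the $\M$-partition-connectedness inequality.

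For \emph{sufficiency}, define $h(X) := r_\M(\S) - r_\M(\S_X)$ for every non-empty $X \subseteq V$. Since $\S_{X \cap Y} = \S_X \cap \S_Y$ and $\S_{X \cup Y} = \S_X \cup \S_Y$, the submodularity of $r_\M$ translates directly into the intersecting supermodularity of $h$; moreover $h \geq 0$ and $h(V) = 0$. The $\M$-partition-connectedness hypothesis reads exactly $e_G(\P) \geq \sum_{X \in \P} h(X)$ for every partition $\P$ of $V$, which is the condition required by Frank's orientation theorem \cite{frank} to produce an orientation $D$ of $G$ satisfying $\rho_D(X) \geq h(X)$ for every non-empty $X \subseteq V$. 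Since $\pi$ remains $\M$-independent, $(D,\S,\pi)$ fulfills the hypotheses of Theorem \ref{packing} and therefore admits an $\M$-basic packing of arborescences; forgetting the orientations yields the desired $\M$-basic packing of rooted-trees in $(G,\S,\pi)$.

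The delicate point is the alignment between the set function $h$ and the precise statement of Frank's orientation theorem: one must verify that intersecting supermodularity (together with $h \geq 0$ and $h(V)=0$) is the structural hypothesis needed, and that the partition inequality extracted from $\M$-partition-connectedness is exactly the one appearing in the theorem. Once this matching is confirmed, both Theorem \ref{packing} and Frank's orientation theorem can be invoked as black boxes, and the argument becomes essentially formal.
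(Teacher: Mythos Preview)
Your proof is correct and follows essentially the same route as the paper, which packages the orientation step into \ref{orientation} (a direct corollary of \ref{frank_theo} with precisely your choice of $h$) and then invokes \ref{packing} in both directions. The only omission is that \ref{frank_theo} also requires $h$ to be non-increasing, which you should add to your list of hypotheses to check; it follows immediately from the monotonicity of $r_\M$.
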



If $\M$ is the free matroid then the problem of $\M$-basic packing of rooted-trees and that of packing of  spanning trees coincide. Hence \ref{thm:packing_non_oriented} is a proper extension of \ref{tuttetheorem}.
It is not difficult to see that this theorem easily implies the following theorem of Katoh and Tanigawa \cite{Kato_Tanigawa}.
 A \emph{rooted-component} of $(G,{\S},\pi)$ is a pair $(C,{\s})$ where  $C$ is a connected subgraph of $G$ and ${\s}\in {\S}_{V(C)}$. 
 
\begin{theo}[Katoh and Tanigawa \cite{Kato_Tanigawa}]
\label{thm:KT_5.1}
 Let $G=(V,E)$ be a graph, ${\S}=\{{\s}_1,\ldots,{\s}_t\}$ a set, $\pi$ a placement of $\S$ in $V$ and $\M$   a matroid on ${\S}$. Then $(G,{\S},\pi)$ admits rooted-components $(C_1,{\s}_1),$ $\ldots,$ $(C_t,{\s}_t)$ such that $E = \cup_{i=1}^t E(C_i)$ and the set $\{{\s}_i\in {\S} : v\in V(C_i)\}$ is a spanning set of $\M$ for every $v\in V$ if and only if $(G,{\S},\pi)$ is $\M$-partition-connected.
\end{theo}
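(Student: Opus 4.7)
My plan is to derive the statement from \ref{thm:packing_non_oriented}.

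For necessity, suppose we are given an edge-disjoint rooted-component decomposition $(C_i,\s_i)_{i=1}^t$ with $\bigcup_i E(C_i)=E$. Fix a partition $\P$ of $V$ and, for $X\in\P$, set $b_X := \{\s_i : V(C_i) \cap X \neq \emptyset\}$. Since each $C_i$ remains connected when the blocks of $\P$ are contracted, summing over $i$ yields $e_G(\P) \geq \sum_{X\in\P} |b_X| - t$. The spanning-set hypothesis forces $r_\M(b_X) = r_\M(\S)$ for every $X$, and $\S_X \subseteq b_X$, so extending a base of $\S_X$ to a base of $b_X$ (which must use only elements of $b_X\setminus\S_X$, since any $y\in\S_X$ outside a base of $\S_X$ is already dependent on it) yields $|b_X| \geq |\S_X| + r_\M(\S) - r_\M(\S_X)$. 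Summing over $X$ and using $\sum_X |\S_X|=t$ gives the $\M$-partition-connectivity inequality.

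For sufficiency, the key step is to cut $\S$ down to an $\M$-independent placement. For each $v$ I fix a maximal independent subset $\S_v^* \subseteq \S_v$ in $\M$ and set $\S^* := \bigcup_v \S_v^*$, $\pi^* := \pi|_{\S^*}$ and $\M^* := \M|_{\S^*}$. Because $\S_v^*$ spans $\S_v$ in $\M$ for every $v$, one gets $r_{\M^*}(\S_X^*) = r_\M(\S_X)$ for every $X \subseteq V$ and in particular $r_{\M^*}(\S^*)=r_\M(\S)$; hence $\pi^*$ is $\M^*$-independent and $(G,\S^*,\pi^*)$ is $\M^*$-partition-connected. By \ref{thm:packing_non_oriented} there is then an $\M^*$-basic packing of edge-disjoint rooted-trees $\{T_{\s^*} : \s^* \in \S^*\}$. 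I put $C_{\s^*} := T_{\s^*}$ for $\s^* \in \S^*$ and $C_\s := (\{\pi(\s)\},\emptyset)$ for $\s \in \S\setminus\S^*$. At every vertex $v$ the base $B_v := \{\s^* : v \in V(T_{\s^*})\}$ of $\M^*$ spans $\S^*$ in $\M$, and $\S^*$ spans $\S$ in $\M$, so $\{\s \in \S : v \in V(C_\s)\} \supseteq B_v$ is already a spanning set of $\M$. To finish, I process each edge $e=uv$ not used by the packing: since $B_u$ is non-empty whenever $r_\M(\S)\geq 1$, there is some $\s$ with $u\in V(C_\s)$, and I append $e$ (together with $v$, if necessary) to that $C_\s$. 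Connectedness, the root, edge-disjointness, and the spanning-set property at every vertex are all preserved by these enlargements.

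The main obstacle I foresee is the rank identity $r_{\M^*}(\S_X^*) = r_\M(\S_X)$ that makes the reduction to \ref{thm:packing_non_oriented} legitimate; it rests on the span-preserving property of vertex-wise maximal independent subsets but is really the heart of the argument. Once this is in hand, the tree-to-component conversion and the absorption of the leftover edges are routine, and the degenerate case $r_\M(\S)=0$ (where the spanning-set condition is vacuous) reduces to the trivial observation that each connected component of $G$ carrying an edge must meet $\pi(\S)$.
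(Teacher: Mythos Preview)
Your derivation from \ref{thm:packing_non_oriented} is exactly what the paper intends: the paper does not spell out a proof of \ref{thm:KT_5.1} but only remarks that it ``easily follows'' from \ref{thm:packing_non_oriented}, and your argument fills in precisely that implication. The reduction via $\S^*=\bigcup_v \S_v^*$ and the rank identity $r_{\M^*}(\S^*_X)=r_\M(\S_X)$ is the natural way to manufacture an $\M$-independent placement, and the absorption of the leftover edges is routine, so your sufficiency direction is correct.

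Two points deserve comment. First, in the necessity direction you silently inserted the word ``edge-disjoint'' into the hypothesis. This is not in the paper's statement of \ref{thm:KT_5.1}, but it is in fact needed: with $G$ a path $a$--$b$--$c$, two roots $\s_1,\s_2$ at $b$, and $\M$ the free matroid of rank~$2$, taking $C_1=C_2=G$ gives a (non-edge-disjoint) decomposition with the spanning-set property, yet $(G,\S,\pi)$ is not $\M$-partition-connected. So the only-if direction is false without edge-disjointness, and the paper's statement is presumably meant to include it (the Katoh--Tanigawa paper does); you should say so explicitly rather than quietly adding the hypothesis. Your counting argument $e_G(\P)\ge\sum_X|b_X|-t$ is then valid, and the rest of the necessity is fine.

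Second, your last sentence about the degenerate case $r_\M(\S)=0$ is not quite a proof: when the rank is zero, $\M$-partition-connectivity is vacuous, so the sufficiency direction would claim that a covering by rooted-components always exists, which fails if some component of $G$ has an edge but no root (or if $\S=\emptyset$ and $E\neq\emptyset$). This is again a defect in the statement rather than in your method; note it as an implicit nondegeneracy assumption.
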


Katoh and Tanigawa deduced  \ref{thm:KT_5.1} from the following dual form of its. We show that \ref{thm:packing_non_oriented} also implies \ref{thm:KT_1.2}.

\begin{theo}[Katoh and Tanigawa \cite{Kato_Tanigawa}]
 \label{thm:KT_1.2}
 Let $G=(V,E)$ be a graph, ${\S}=\{\s_1,\ldots,{\s}_t\}$ a set and $\pi$ a placement of $\S$ in $V$. Let $\M$ be a matroid on ${\S}$   of rank $k$ with rank function $r_{\M}$. Then $(G,{\S},\pi)$ admits an $\M$-basic packing of rooted-trees such that $E$ is the union of the edge sets of these trees if and only if  $\pi$ is $\M$-independent, $|E|+|{\S}|=k|V|$ and 
$|F|+|{\S}_{V(F)}|\leq k|V(F)|-k+r_\M({\S}_{V(F)})$ for all non-empty $F\subseteq E.$
\end{theo}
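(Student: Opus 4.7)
The plan is to reduce Theorem~\ref{thm:KT_1.2} to Theorem~\ref{thm:packing_non_oriented}. We will show that, given the first two conditions (\(\pi\) is $\M$-independent and $|E|+|\S|=k|V|$), the $F$-inequality of Theorem~\ref{thm:KT_1.2} is equivalent to $(G,\S,\pi)$ being $\M$-partition-connected. Theorem~\ref{thm:packing_non_oriented} then produces an $\M$-basic packing $T_1,\dots,T_t$ whose total edge count equals $\sum_i(|V(T_i)|-1)=\sum_{v\in V}|\{i:v\in V(T_i)\}|-t=k|V|-|\S|=|E|$, so the packing automatically covers $E$.

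For necessity, fix an $\M$-basic packing $T_1,\ldots,T_t$ with $E=\bigcup_i E(T_i)$. The independence of $\pi$ follows from $\S_v\subseteq\{\s_i:v\in V(T_i)\}$, a base of $\M$, and the cardinality equality follows from the count above. For the $F$-inequality, fix non-empty $F\subseteq E$ and set $F_i=F\cap E(T_i)$; each $F_i$ is a forest in $T_i$, so on the vertex set $V(T_i)\cap V(F)$ it has $c_i:=|V(T_i)\cap V(F)|-|F_i|$ connected components, with $c_i\geq 1$ whenever $V(T_i)\cap V(F)\neq\emptyset$. Summing over $i$ and using that each $v\in V(F)$ lies in exactly $k$ trees yields $|F|=k|V(F)|-\sum_ic_i$, so the desired inequality reduces to $\sum_ic_i\geq k+|\S_{V(F)}|-r_\M(\S_{V(F)})$. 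With $I=\{i:V(T_i)\cap V(F)\neq\emptyset\}$ and $B=\{\s_i:i\in I\}$, one has $\sum_ic_i\geq|I|=|B|$, and $B\supseteq\S_{V(F)}$ together with $B\supseteq\{\s_i:v\in V(T_i)\}$ for any $v\in V(F)$ gives $r_\M(B)=k$. Extending a basis of $\S_{V(F)}$ inside $B$ to a basis of $B$ provides the missing $k-r_\M(\S_{V(F)})$ elements, so $|B|\geq|\S_{V(F)}|+k-r_\M(\S_{V(F)})$, as required.

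For sufficiency, assume the three conditions. To deduce $\M$-partition-connectedness, fix a partition $\P$ of $V$ and verify $|E(X)|+k+|\S_X|-r_\M(\S_X)\leq k|X|$ for each $X\in\P$. If $E(X)\neq\emptyset$, apply the $F$-inequality with $F=E(X)$ and set $W=X\setminus V(E(X))$: combining $r_\M(\S_X)-r_\M(\S_{V(E(X))})\leq|\S_W|$ (submodularity) with $|\S_W|\leq k|W|$ (since $\pi$-independence forces $|\S_w|=r_\M(\S_w)\leq k$) closes the gap. If $E(X)=\emptyset$, independence gives $|\S_X|=\sum_{v\in X}r_\M(\S_v)\leq|X|\,r_\M(\S_X)$, so $|\S_X|-r_\M(\S_X)\leq(|X|-1)r_\M(\S_X)\leq k(|X|-1)$, again yielding the claim. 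Summing over $X\in\P$, substituting $\sum_X|\S_X|=|\S|$, and using $|E|+|\S|=k|V|$ rearranges to $e_G(\P)\geq k|\P|-\sum_Xr_\M(\S_X)$. Theorem~\ref{thm:packing_non_oriented} then supplies an $\M$-basic packing, which covers $E$ by the count above.

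The main obstacle is the matroid augmentation step in the necessity direction: the bound $|B|\geq|\S_{V(F)}|+k-r_\M(\S_{V(F)})$ captures precisely the ``spanning'' structure of the packing---namely that every vertex lies in a base of $\M$---and is where the combinatorial data of the packing interacts with the algebraic structure of $\M$. The sufficiency, by contrast, is an accounting argument once one recognises that the $F$-inequality applied at $F=E(X)$ is the correct local constraint governing each partition class.
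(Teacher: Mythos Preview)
Your argument is correct and follows the paper's own route---derive $\M$-partition-connectedness from the $F$-inequality applied at $F=E(X)$, invoke \ref{thm:packing_non_oriented}, and use the edge count $|E|=k|V|-|\S|$ to force the packing to cover $E$---and you are in fact more careful than the paper, which silently replaces $V(E(X))$ by $X$, ignores the case $E(X)=\emptyset$, and omits the necessity proof entirely (deferring it to \cite{Kato_Tanigawa}). One small slip: in the $E(X)\neq\emptyset$ case the gap between the $F$-inequality and the desired bound is closed by monotonicity $r_\M(\S_{V(E(X))})\le r_\M(\S_X)$ together with $|\S_W|\le k|W|$; the submodularity inequality $r_\M(\S_X)-r_\M(\S_{V(E(X))})\le|\S_W|$ you cite is true but points the wrong way and is not the estimate you actually need.
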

 
\begin{proof}
The necessity of the conditions is pretty straightforward as one can see in \cite{Kato_Tanigawa}. 

Now suppose that the conditions hold. 
%
For every partition $\P$ of $V$, by the inequality applied for $E(X) \ (X\in {\P})$ and by the equality,
 $e_G(\P)=|E|-\sum_{X\in \P}|E(X)|
 \geq |E|- \sum_{X\in \P}(k|X|-k+r_\M({\S}_X)-|{\S}_X|)
 = k|\P|-\sum_{X\in\P}r_\M({\S}_X). $
Hence, $\pi$ is $\M$-independent and $(G,{\S},\pi)$ is $\M$-partition-connected. Then \ref{thm:packing_non_oriented} implies that $(G,{\S},\pi)$ admits an $\M$-basic packing of rooted-trees and, by $|E|+|{\S}|=k|V|$,  $E$ is the union of the edge sets of the trees in the packing.
\end{proof}

The main contribution of the present paper is to mimic Frank's approach (mentioned above on packing of spanning trees) for basic packing of rooted-trees. We provide the directed counterpart of \ref{thm:packing_non_oriented}, a short proof of it and we show that it implies \ref{thm:packing_non_oriented}  (and hence \ref{thm:KT_5.1} and \ref{thm:KT_1.2}) via an orientation theorem of Frank.
\medskip

Inspired by the definition of Katoh and Tanigawa, we define an \emph{$\M$-basic packing of arborescences} as a set  $\{T_1,\dots ,T_t\}$ of pairwise arc-disjoint arborescences  such that  for  $i=1,\dots ,t$,  $T_i$ is rooted at $\pi({\s}_i)$ and, for each $v \in V$, the set $\{{\s}_i \in {\S} : v \in V(T_i)\}$ forms a base of $\M$. We also say that $T_i$ is rooted at ${\s}_i$. For a better understanding, let us mention that the arborescences are not necessarily spanning and each vertex of $D$ belongs to exactly $r_{\M}(\S)$ arborescences. 

\medskip

Our main result is the following theorem. The digraph with roots $(D,{\S},\pi)$ is called \emph{$\M$-connected} if 
\begin{eqnarray}
\rho_D(X) \geq r_{\M}({\S}) - r_{\M}({\S}_X) &&\text{ for all non-empty } X \subseteq V.\label{cond2}
\end{eqnarray}

\begin{theo} \label{packing} Let $(D,{\S},\pi)$ be a digraph with roots and  $\M$  a matroid on ${\S}$. There exists an $\M$-basic packing of arborescences in $(D,{\S},\pi)$ if and only if  $\pi$ is $\M$-independent and $(D,{\S},\pi)$ is $\M$-connected.
\end{theo}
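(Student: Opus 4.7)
Given an $\M$-basic packing $\{T_1,\ldots,T_t\}$, set $B_v:=\{{\s}_i : v\in V(T_i)\}$ for each $v\in V$, which by hypothesis is a base of $\M$. Since $T_i$ is rooted at $\pi({\s}_i)$, we have ${\S}_v\subseteq B_v$, so ${\S}_v$ is independent and $\pi$ is $\M$-independent. For a non-empty $X\subseteq V$, pick $v\in X$: each ${\s}_i\in B_v\setminus{\S}_X$ satisfies $v\in V(T_i)\cap X$ and $\pi({\s}_i)\notin X$, so $T_i$ contributes a distinct arc entering $X$. Because $B_v\cap{\S}_X$ is independent in $\M$, its cardinality is at most $r_\M({\S}_X)$; thus $\rho_D(X)\geq |B_v|-r_\M({\S}_X)=r_\M({\S})-r_\M({\S}_X)$, verifying (\ref{cond2}).

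\textbf{Sufficiency.} I plan to induct on $|A(D)|+|{\S}|$. The case ${\S}=\emptyset$ is trivial; if $A(D)=\emptyset$ and ${\S}\neq\emptyset$, (\ref{cond2}) at each singleton forces $r_\M({\S}_v)=r_\M({\S})$, so each ${\S}_v$ is a base and the single-vertex arborescences $T_i=(\{\pi({\s}_i)\},\emptyset)$ do the job. Otherwise I peel off one arborescence: pick ${\s}\in{\S}$ and set $W:=\{v\in V:{\S}_v\cup\{{\s}\}\text{ is independent in }\M\}$, which contains $\pi({\s})$. Then (a) construct an arborescence $T\subseteq D[W]$ rooted at $\pi({\s})$ spanning $W$; (b) form the reduced instance $(D-A(T),\,{\S}',\,\pi|_{{\S}'},\,\M')$, where ${\S}':={\S}\setminus{\rm Span}_\M(\{{\s}\})$ (this removes ${\s}$ together with the elements ${\s}'$ of ${\S}$ parallel to ${\s}$ in $\M$, whose placements necessarily lie outside $W$ since otherwise ${\S}_{\pi({\s}')}\cup\{{\s}\}$ would contain the dependent pair $\{{\s},{\s}'\}$) and $\M':=(\M/{\s})|_{{\S}'}$; (c) invoke the inductive hypothesis on this reduced instance and reassemble, adjoining $T$ as the arborescence indexed by ${\s}$ and one trivial single-vertex arborescence per discarded element.

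Two technical checks underpin this reduction. First, $D[W]$ must admit such a spanning arborescence: by the classical in-degree criterion, this amounts to $\rho_{D[W]}(X)\geq 1$ for every non-empty $X\subseteq W\setminus\{\pi({\s})\}$, which follows from (\ref{cond2}) together with the rank boost $r_\M({\S})-r_\M({\S}_X)\geq 1$ (because ${\s}\notin{\rm Span}_\M({\S}_X)$ whenever $X\subseteq W$), combined with a control on the arcs of $D$ entering $X$ from $V\setminus W$. Second, using the contraction identities $r_{\M'}({\S}')=r_\M({\S})-1$ and $r_{\M'}({\S}'_Y)=r_\M({\S}_Y\cup\{{\s}\})-1$, preservation of (\ref{cond2}) for the reduced instance reduces to requiring
\[
\rho_T(Y)\leq\bigl[\rho_D(Y)-(r_\M({\S})-r_\M({\S}_Y))\bigr]+\bigl[r_\M({\S}_Y\cup\{{\s}\})-r_\M({\S}_Y)\bigr]
\]
for every non-empty $Y\subseteq V$. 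The main obstacle is to choose the arborescence $T$ so that this bound holds uniformly in $Y$; I expect this to be handled by an uncrossing argument on the tight sets of (\ref{cond2}), exploiting submodularity of $\rho_D$ and supermodularity of $Y\mapsto r_\M({\S})-r_\M({\S}_Y)$ (the latter following from submodularity of $r_\M$), in the spirit of Lov\'asz's proof of \ref{edmondstheorem}.
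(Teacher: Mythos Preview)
Your necessity argument is correct and essentially the same as the paper's.

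Your sufficiency plan, however, has a genuine gap at step~(a). The arborescence in an $\M$-basic packing that is rooted at ${\s}$ need not span all of $W=\{v:\S_v\cup\{{\s}\}\text{ independent}\}$, and in fact $D[W]$ need not contain \emph{any} spanning arborescence rooted at $\pi({\s})$. Here is a concrete instance: take $\M=U_{2,3}$ on $\S=\{{\s}_1,{\s}_2,{\s}_3\}$, vertices $V=\{a,b,c\}$, placement $\pi({\s}_1)=a$, $\pi({\s}_2)=\pi({\s}_3)=b$, and arcs $b\!\to\! a$ together with two parallel copies of $b\!\to\! c$. One checks that $\pi$ is $\M$-independent and $(D,\S,\pi)$ is $\M$-connected. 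For ${\s}={\s}_1$ we get $W=\{a,c\}$, but $D[W]$ has no arcs at all, so no arborescence rooted at $a$ can reach $c$. Your justification ``combined with a control on the arcs of $D$ entering $X$ from $V\setminus W$'' is precisely where the argument breaks: all arcs into $\{c\}$ come from $b\in V\setminus W$, and there is no such control. (As a secondary issue, your rank-boost claim ``${\s}\notin{\rm Span}_\M(\S_X)$ whenever $X\subseteq W$'' is also false in general: the condition $\S_v\cup\{{\s}\}$ independent for each $v\in X$ does not prevent ${\s}\in{\rm Span}_\M\bigl(\bigcup_{v\in X}\S_v\bigr)$.)

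The deeper obstruction is that, unlike in Lov\'asz's proof of \ref{edmondstheorem} where every arborescence is spanning, here the vertex set of the arborescence attached to a given root is not determined in advance, so a ``peel off one arborescence'' strategy has no canonical target set to span. The paper sidesteps this entirely: instead of peeling arborescences, it inducts on $|A|$ by removing a single \emph{bad} arc $uv$ (one with $\S_u\not\subseteq{\rm Span}_\M(\S_v)$) and compensating by adding to $\S$ a new element parallel to some ${\s}\in\S_u\setminus{\rm Span}_\M(\S_v)$, placed at $v$. The uncrossing work all goes into showing that some bad arc and some such ${\s}$ can be chosen so that the smaller instance remains $\M$-connected; the arborescences themselves are only assembled at the very end, when no bad arcs remain and every $\S_v$ is already a base.
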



If $\M$ is the free matroid and $\pi$ places every element of $\S$ at a single vertex $r$ of $D$ then the problem of $\M$-basic packing of arborescences and that of  packing of  spanning arborescences rooted at $r$ coincide. Hence \ref{packing} is a proper extension of \ref{edmondstheorem}.

\medskip

Let us recall the following general orientation result of Frank \cite{frank}.

\begin{theo}[Frank \cite{frank}]\label{frank_theo}
 Let $G=(V,E)$ be a graph and  $h : 2^V \rightarrow \mathbb{Z}_+$  an intersecting supermodular non-negative non-increasing set-function such that $h(\emptyset)=h(V)=0$. There exists an orientation $D$ of $G$ such that $\rho_D(X) \geq h(X)$ for all non-empty $X \subset V$ if and only if for every partition  $\P$ of $V$,

 \[ e_G(\P) \geq \sum_{X \in \P}h(X).\] 
\end{theo}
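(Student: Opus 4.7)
\emph{Necessity} is immediate. Given any orientation $D$ of $G$ with $\rho_D(X)\ge h(X)$ for every non-empty $X\subset V$ and any partition $\P$ of $V$, each edge of $G$ crossing $\P$ becomes an arc entering exactly one member of $\P$, so $e_G(\P)=\sum_{X\in\P}\rho_D(X)\ge\sum_{X\in\P}h(X)$.

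For \emph{sufficiency}, I would argue by induction on $|E|$. The base case $|E|=0$ is handled by applying the partition inequality to $\{X,V\setminus X\}$ for each non-empty proper $X\subset V$, which forces $h(X)=0$; the empty orientation is then trivially valid. For the inductive step, pick any edge $e=uv\in E$. Let $\chi_1,\chi_2\colon 2^V\to\{0,1\}$ be defined by $\chi_1(X)=1$ iff $v\in X$ and $u\notin X$, and $\chi_2(X)=1$ iff $u\in X$ and $v\notin X$, so that orienting $e$ as $u\to v$ (respectively $v\to u$) and deleting it reduces the task to orienting $G-e$ with demand $h_i:=h-\chi_i$ for $i=1$ or $2$. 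A direct computation shows that the partition inequality persists for $(G-e,h_i)$: when $e$ crosses a partition $\P$, both $e_G(\P)$ and $\sum_{X\in\P}h(X)$ drop by exactly one, while nothing changes if $e$ lies inside a single part. A short case analysis shows that each $\chi_i$ is intersecting submodular, so $h-\chi_i$ remains intersecting supermodular, and $h_i(\emptyset)=h_i(V)=0$ is immediate.

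The only conditions on $h_i$ that could fail are \emph{non-negativity} and \emph{non-increasingness}, both threatened exclusively on sets $X$ with $\chi_i(X)=1$. The heart of the proof is therefore the claim that at least one of the two orientations of $e$ preserves these properties, possibly after replacing $h_i$ by its non-negative, non-increasing envelope, whose intersecting supermodularity must then be re-verified by crucial use of the monotonicity of $h$. Assuming the contrary, one obtains minimal ``blocking'' sets $X$ on the $v$-side and $Y$ on the $u$-side with $h(X)=h(Y)=0$; intersecting supermodularity together with the monotonicity of $h$ then propagates these zeros and allows one to exhibit a partition of $V$ refining $\{X,Y,V\setminus(X\cup Y)\}$ (through which $e$ is a crossing edge) whose partition inequality is violated, contradicting the hypothesis. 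This contradiction argument, which simultaneously invokes intersecting supermodularity, monotonicity and non-negativity of $h$, is the main obstacle of the proof.
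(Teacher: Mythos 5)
First, note that the paper does not prove this statement at all: it is imported verbatim as a known theorem of Frank \cite{frank}, so there is no proof in the paper to compare against. Your necessity argument is correct and standard. The sufficiency direction, however, has a genuine gap: the entire content of the theorem is concentrated in the one claim you explicitly defer, namely that for the chosen edge $e=uv$ at least one of the two orientations yields a reduced demand function to which the induction hypothesis applies. You yourself label this ``the main obstacle of the proof'' and only gesture at a contradiction argument with ``minimal blocking sets''; as written, nothing is actually proved. Worse, the blocking condition is not merely $h(X)=0$: non-increasingness of $h_1=h-\chi_1$ already fails whenever there are sets $X\subseteq Z$ with $v\in X$, $u\notin X$, $u\in Z$ and $h(X)=h(Z)>0$, which is entirely generic, so your description of the obstruction is not even the right one.

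The proposed repair --- replacing $h_i$ by its non-negative non-increasing envelope $\tilde h_i(X)=\max\bigl(0,\max_{Y\supseteq X}h_i(Y)\bigr)$ --- introduces a further unaddressed problem: on a set $X$ with $\chi_i(X)=1$ the envelope can jump back up (e.g.\ $\tilde h_1(X)\geq h(X\cup\{u\})$, which may exceed $h(X)-1$), so the partition inequality $e_{G-e}(\P)\geq\sum_{X\in\P}\tilde h_i(X)$, which you only verified for $h_i$, is no longer automatic; and the intersecting supermodularity of the envelope is also asserted rather than shown. Until the dichotomy ``one of the two orientations of $e$ admits a valid reduced instance'' is actually established, the induction does not close. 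For reference, the standard proofs of this theorem avoid single-edge induction altogether: one reduces to finding an in-degree vector $m$ with $m(V)=|E|$ and $m(X)\geq i_G(X)+h(X)$ for all $X$ (where $i_G(X)$ is the number of induced edges), invokes the degree-prescribed orientation lemma, and covers the intersecting supermodular function $i_G+h$ using exactly the partition condition; alternatively one uses submodular flows. Reworking your argument along those lines, or fully carrying out the two-sided blocking-set contradiction, is necessary before this can count as a proof.
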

%
%
\ref{frank_theo} immediately implies  the following corollary by taking $h(X) = r_{\M}({\S}) - r_{\M}({\S}_X)$ if $X$ is not empty and $h(\emptyset)=0$.
\begin{cor}\label{orientation}
 Let $(G,{\S},\pi)$ be a graph with roots and  $\M$  a matroid on ${\S}$. There exists an orientation $D$ of $G$ such that $(D,{\S},\pi)$ is $\M$-connected if and only if $(G,{\S},\pi)$ is $\M$-partition-connected.
 \end{cor}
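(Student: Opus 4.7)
The plan is to apply \ref{frank_theo} directly to the function $h : 2^V \rightarrow \mathbb{Z}$ defined by $h(X) = r_\M(\S) - r_\M(\S_X)$ for all non-empty $X \subseteq V$ and $h(\emptyset) = 0$, exactly as suggested in the paper. With this choice, the orientation condition $\rho_D(X) \geq h(X)$ for every non-empty $X$ is literally the definition (\ref{cond2}) of $(D,\S,\pi)$ being $\M$-connected, so the two ``there exists $D$'' statements in the corollary and in \ref{frank_theo} will coincide.

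Next I would match the two partition conditions. Summing over $X \in \P$,
\[ \sum_{X \in \P} h(X) = \sum_{X \in \P}\bigl(r_\M(\S) - r_\M(\S_X)\bigr) = r_\M(\S)|\P| - \sum_{X \in \P} r_\M(\S_X), \]
so Frank's inequality $e_G(\P) \geq \sum_{X\in\P} h(X)$ is exactly the definition of $(G,\S,\pi)$ being $\M$-partition-connected. Thus, once the hypotheses of \ref{frank_theo} are checked for $h$, both directions of the corollary fall out at once.

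The remaining bookkeeping is to verify that $h$ fits the hypotheses of \ref{frank_theo}. Non-negativity and integrality follow from monotonicity of the matroid rank applied to $\S_X \subseteq \S$; $h(V)=0$ because $\S_V = \S$; and $h(\emptyset)=0$ by definition. Monotone non-increasing is immediate: $X \subseteq Y$ gives $\S_X \subseteq \S_Y$, hence $r_\M(\S_X) \leq r_\M(\S_Y)$ and so $h(X) \geq h(Y)$. For intersecting supermodularity, let $X,Y$ be intersecting non-empty subsets of $V$. Since $\S_{X\cap Y} = \S_X \cap \S_Y$ and $\S_{X\cup Y} = \S_X \cup \S_Y$, the submodularity of $r_\M$ yields
\[ r_\M(\S_X) + r_\M(\S_Y) \geq r_\M(\S_{X\cap Y}) + r_\M(\S_{X\cup Y}), \]
which rearranges to $h(X)+h(Y) \leq h(X\cap Y)+h(X\cup Y)$.

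There is no real obstacle: the only delicate point is that the inequality above can fail when $X\cap Y = \emptyset$ (because we set $h(\emptyset)=0$ rather than $r_\M(\S)$), which is precisely why \ref{frank_theo} is stated for \emph{intersecting} supermodular functions. With the above verifications, \ref{frank_theo} applied to $h$ proves the corollary.
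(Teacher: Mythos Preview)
Your proof is correct and follows exactly the approach indicated in the paper, which simply states that the corollary follows from \ref{frank_theo} by taking $h(X)=r_\M(\S)-r_\M(\S_X)$ for non-empty $X$ and $h(\emptyset)=0$. You have correctly filled in the routine verifications (non-negativity, $h(\emptyset)=h(V)=0$, monotonicity, intersecting supermodularity via submodularity of $r_\M$ and $\S_{X\cap Y}=\S_X\cap\S_Y$, $\S_{X\cup Y}=\S_X\cup\S_Y$), and your remark about why only \emph{intersecting} supermodularity holds is apt.
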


Let us show that  \ref{orientation} and \ref{packing}    imply \ref{thm:packing_non_oriented}.

\begin{proof} {\it (of \ref{thm:packing_non_oriented})}
First suppose that  there exists an $\M$-basic packing $\{T_1,\dots ,T_t\}$ of rooted-trees in $(G,{\S},\pi)$. Let $D$ be  an orientation of $G$  where each tree $T_i$ rooted in ${\s}_i$ becomes an arborescence $T'_i$ rooted in ${\s}_i$. Then   $\{T'_1,\dots ,T'_t\}$ is  an $\M$-basic packing of arborescences in $(D,{\S},\pi)$. By \ref{packing},  $\pi$ is $\M$-independent and $(D,{\S},\pi)$ is $\M$-connected. Hence, by \ref{orientation}, $G$ is $\M$-partition-connected.

Now suppose that $\pi$ is $\M$-independent  and $(G,{\S},\pi)$ is $\M$-partition-connected. By 
\ref{orientation}, there exists an orientation $D$ of $G$ such that $(D,{\S},\pi)$ is $\M$-connected. Then, by \ref{packing}, there exists  an $\M$-basic packing of arborescences in $(D,{\S},\pi)$ which provides, by forgetting the orientation, an $\M$-basic packing  of rooted-trees in $(G,{\S},\pi)$.
\end{proof}

\section{Proof of the main theorem}

First we prove the necessity of the conditions.
\begin{proof} {\it (of necessity in \ref{packing})}
Suppose that there exists an $\M$-basic packing $\{T_1,\dots ,T_t\}$ of arborescences in $(D,{\S},\pi)$. Let $v$ be an arbitrary vertex of $V$ and $X$ a vertex set containing $v.$ Then ${\sf B}:=\{{\s}_i \in {\S} : v \in V(T_i)\}$ forms a base of $\M.$ Let ${\sf B}_1={\sf B}\cap {\S}_X$ and ${\sf B}_2={\sf B}\setminus {\S}_X.$ Then, since ${\sf B}_1$ is independent in $\M$ and ${\S}_v\subseteq {\sf B}_1,$  $\pi$ is $\M$-independent. Moreover, since $r_{\M}$ is monotone, $|{\sf B}_1|=r_{\M}({\sf B}_1)\leq r_{\M}({\S}_X)$. For each root ${\s}_i\in {\sf B}_2,$ there exists an arc of $T_i$ that enters $X$ and the arborescences are arc-disjoint, so we have $\rho _D(X)\geq |{\sf B}_2|=|{\sf B}|-|{\sf B}_1|\geq r_{\M}(\S)-r_{\M}({\S}_X)$ that is $(D,{\S},\pi)$ is $\M$-connected.
\end{proof}

Before proving the sufficiency of the conditions we establish two technical claims.

\begin{claim}\label{span_lemma}
 Let $\M$ be a matroid on ${\S}$ with rank function $r_{\M}$ and ${\sf P},{\sf Q} \subseteq {\S}$ such that $r_{\M}({\sf P} \cap {\sf Q}) + r_{\M}({\sf P} \cup {\sf Q}) = r_{\M}({\sf P}) + r_{\M}({\sf Q})$ and ${\s} \in {\rm Span}_{\M}({\sf P}) \cap {\rm Span}_{\M}({\sf Q}).$ Then ${\s}\in {\rm Span}_{\M}({\sf P} \cap {\sf Q}).$
\end{claim}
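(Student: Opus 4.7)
The plan is to derive the conclusion from a single judicious application of submodularity, combined with the two hypotheses. The trick is to choose the right pair of sets on which to apply submodularity.

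First, I would apply submodularity of $r_\M$ to the sets ${\sf P}\cup\{{\s}\}$ and ${\sf Q}\cup\{{\s}\}$, whose intersection is $({\sf P}\cap{\sf Q})\cup\{{\s}\}$ and whose union is ${\sf P}\cup{\sf Q}\cup\{{\s}\}$. This yields
\[
r_\M(({\sf P}\cap{\sf Q})\cup\{{\s}\}) + r_\M({\sf P}\cup{\sf Q}\cup\{{\s}\})\ \le\ r_\M({\sf P}\cup\{{\s}\}) + r_\M({\sf Q}\cup\{{\s}\}).
\]
Next, using that ${\s}\in{\rm Span}_\M({\sf P})\cap{\rm Span}_\M({\sf Q})$, I would rewrite the right-hand side as $r_\M({\sf P})+r_\M({\sf Q})$, and by monotonicity bound $r_\M({\sf P}\cup{\sf Q}\cup\{{\s}\})\ge r_\M({\sf P}\cup{\sf Q})$ on the left-hand side.

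Then I would invoke the modular equality hypothesis $r_\M({\sf P})+r_\M({\sf Q})=r_\M({\sf P}\cap{\sf Q})+r_\M({\sf P}\cup{\sf Q})$ to cancel the term $r_\M({\sf P}\cup{\sf Q})$ appearing on both sides, leaving $r_\M(({\sf P}\cap{\sf Q})\cup\{{\s}\})\le r_\M({\sf P}\cap{\sf Q})$. The reverse inequality is immediate from monotonicity, so equality holds and ${\s}\in{\rm Span}_\M({\sf P}\cap{\sf Q})$, as required.

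There is no real obstacle beyond spotting the right pair of sets; once one decides to apply submodularity to ${\sf P}\cup\{{\s}\}$ and ${\sf Q}\cup\{{\s}\}$ (rather than to ${\sf P}$ and ${\sf Q}$ directly), the three hypotheses — the span condition, monotonicity, and the modular equality — line up to collapse everything to the desired inequality in one line of arithmetic.
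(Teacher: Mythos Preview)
Your proof is correct and is essentially the same as the paper's: both apply submodularity to the pair ${\sf P}\cup\{{\s}\}$ and ${\sf Q}\cup\{{\s}\}$, then combine monotonicity, the span hypothesis, and the modular equality to force $r_\M(({\sf P}\cap{\sf Q})\cup\{{\s}\})=r_\M({\sf P}\cap{\sf Q})$. The paper presents it as a single closed chain of (in)equalities, but the ingredients and logic are identical.
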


\begin{proof}
By the monotonicity and submodularity of the rank function and by the assumptions, 
$r_{\M}({\sf P} \cap {\sf Q}) + r_{\M}({\sf P} \cup {\sf Q}) \leq  r_{\M}(({\sf P}\cap {\sf Q})\cup {\s}) + r_{\M}(({\sf P} \cup {\sf Q})\cup {\s})
  \leq  r_{\M}({\sf P}\cup {\s}) + r_{\M}({\sf Q}\cup {\s})
 =  r_{\M}({\sf P}) + r_{\M}({\sf Q})
 =  r_{\M}({\sf P} \cap {\sf Q}) + r_{\M}({\sf P} \cup {\sf Q}). $
Hence equality holds everywhere, in particular $r_{\M}({\sf P} \cap {\sf Q})=r_{\M}(({\sf P}\cap {\sf Q})\cup {\s})$, that is ${\s} \in {\rm Span}_{\M}({\sf P} \cap {\sf Q}).$
\end{proof}

Let us introduce the following definitions.
A vertex set $X$ is called \emph{tight} if 
$\rho_D(X) = r_{\M}(\S) - r_{\M}({\S}_X)$. For vertex sets $X$ and $Y$, we say that $Y$ \emph{dominates} $X$ if ${\S}_X\subseteq {\rm Span}_{\M}({\S}_Y).$ Note that since, for ${\sf Q}\subseteq {\S},$ ${\rm Span}_{\M}({\rm Span}_{\M}({\sf Q}))={\rm Span}_{\M}({\sf Q})$, domination is a transitive relation.
We say that an arc $uv$ is \emph{good} if $v$ dominates $u$, otherwise it is \emph{bad}.

\begin{claim}\label{tight}
Suppose that $(D,\S,\pi)$ is $\M$-connected. Let $X$  be a tight set and $v$ a vertex of $X.$
\begin{itemize}
\item [(a)] If $Y$ is a tight set that contains $v$, then  $X \cap Y$ and $X \cup Y$ are tight and $r_{\M}({\S}_{X} \cap {\S}_{Y}) + r_{\M}({\S}_{X} \cup {\S}_{Y}) = r_{\M}({\S}_X) + r_{\M}({\S}_Y).$
\item [(b)] If  $Y$ is the set of vertices of $X$ from which  $v$  is reachable in $D[X]$, then $v\in Y\subseteq X$, $Y$ is tight and dominates $X.$
\item [(c)] If  $Y$ is the set of vertices of $X$ from which  $v$  is reachable in $D[X]$ using only good arcs, then  $v$ dominates $Y.$ 
\end{itemize}

\end{claim}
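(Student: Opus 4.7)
The plan is to prove the three parts in order, each following from a short submodularity manipulation.

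For part~(a), note first that since $v\in X\cap Y$ the set $X\cap Y$ is non-empty, and since $\pi$ is a function we have $\S_{X\cap Y}=\S_X\cap\S_Y$ and $\S_{X\cup Y}=\S_X\cup\S_Y$. I would then chain
$\rho_D(X)+\rho_D(Y)\geq \rho_D(X\cap Y)+\rho_D(X\cup Y)\geq 2r_{\M}(\S)-r_{\M}(\S_X\cap\S_Y)-r_{\M}(\S_X\cup\S_Y)\geq 2r_{\M}(\S)-r_{\M}(\S_X)-r_{\M}(\S_Y)$,
using submodularity of $\rho_D$, then $\M$-connectivity applied at $X\cap Y$ and $X\cup Y$, then submodularity of $r_{\M}$. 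By tightness of $X$ and $Y$ the extreme ends of the chain coincide, so equality holds throughout. This delivers both conclusions of (a) simultaneously: $X\cap Y$ and $X\cup Y$ are tight, and $r_{\M}(\S_X\cap\S_Y)+r_{\M}(\S_X\cup\S_Y)=r_{\M}(\S_X)+r_{\M}(\S_Y)$.

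For part~(b), $v$ is trivially reachable from itself in $D[X]$, so $v\in Y\subseteq X$ and $Y$ is non-empty. The key observation is that any arc of $D$ entering $Y$ cannot originate in $X\setminus Y$: such an arc would carry the reachability of $v$ backwards, contradicting the definition of $Y$. Hence $\varrho_D(Y)\subseteq \varrho_D(X)$, so by $\M$-connectivity at $Y$ and tightness of $X$,
$r_{\M}(\S)-r_{\M}(\S_Y)\leq \rho_D(Y)\leq \rho_D(X)=r_{\M}(\S)-r_{\M}(\S_X)$,
yielding $r_{\M}(\S_X)\leq r_{\M}(\S_Y)$. The reverse inequality follows from $\S_Y\subseteq \S_X$ and monotonicity, so equality holds everywhere. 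This gives tightness of $Y$ together with $\S_X\subseteq {\rm Span}_{\M}(\S_Y)$, i.e.\ $Y$ dominates $X$.

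For part~(c), fix $u\in Y$ and take a good path $u=u_0,u_1,\ldots,u_k=v$ in $D[X]$. Each good arc $u_iu_{i+1}$ means, by definition, that $u_{i+1}$ dominates $u_i$; by the transitivity of domination (noted in the paragraph preceding the claim) $v$ dominates $u$, i.e.\ $\S_u\subseteq {\rm Span}_{\M}(\S_v)$. Taking the union over $u\in Y$ yields $\S_Y\subseteq {\rm Span}_{\M}(\S_v)$, so $v$ dominates $Y$.

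The only mildly delicate step is~(a), where one must align two different submodularities (on $\rho_D$ and on $r_{\M}$) with $\M$-connectivity applied to \emph{both} $X\cap Y$ and $X\cup Y$ and with the two tightness equalities. Parts~(b) and~(c) are then essentially immediate from the definitions and do not require Claim~\ref{span_lemma} directly; I would expect Claim~\ref{span_lemma} combined with part~(a) to be what later enables the fusion of two tight sets in the main sufficiency argument.
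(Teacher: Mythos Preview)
Your proposal is correct and matches the paper's own proof essentially line for line: part~(a) is the same sandwich of the two submodularities with $\M$-connectivity and tightness (the paper merely writes the chain starting from the $r_{\M}$ side rather than the $\rho_D$ side), and parts~(b) and~(c) are argued identically. Your closing remark about how (a) combines with Claim~\ref{span_lemma} in the main sufficiency proof is also exactly how the paper uses it.
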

\begin{proof}
(a) By the submodularity of  $r_{\M}$, tightness of $X$ and $Y$, the submodularity of $\rho_D$,  $X \cap Y \neq \emptyset$ and (\ref{cond2}),
 $ r_{\M}({\S}_{X \cap Y}) + r_{\M}({\S}_{X \cup Y})  =  r_{\M}({\S}_{X} \cap {\S}_{Y}) + r_{\M}({\S}_{X} \cup {\S}_{Y}) 
  \leq  r_{\M}({\S}_X) + r_{\M}({\S}_Y) 
   =  r_{\M}(\S) - \rho_D(X) + r_{\M}(\S) - \rho_D(Y) 
   \leq  r_{\M}(\S) - \rho_D(X \cap Y) + r_{\M}(\S) - \rho_D(X \cup Y) 
   \leq  r_{\M}({\S}_{X\cap Y}) + r_{\M}({\S}_{X \cup Y}).
$
 Hence equality holds everywhere and (a) follows.
 \medskip
 
(b) By the definition of $Y$, $v\in Y\subseteq X$ and every arc that enters $Y$ enters $X$ as well. Then, by (\ref{cond2}), the tightness of $X$ and the monotonicity of $ r_{\M}$, we have $r_{\M}(\S) - r_{\M}({\S}_Y)  \leq  \rho_D(Y)
  \leq  \rho_D(X)=  r_{\M}(\S) - r_{\M}({\S}_X) \leq  r_{\M}(\S) - r_{\M}({\S}_Y).$
   Thus equality holds everywhere and (b) follows.
   \medskip
 
(c)  For all $y\in Y$, there exists a directed path  $y=v_l, \dots , v_1=v$ from $y$ to $v$ in $D[X]$ using only good arcs. Then ${\S}_y={\S}_{v_l}\subseteq \dots \subseteq {\rm Span}_{\M}({\S}_{v_1})= {\rm Span}_{\M}({\S}_v)$. Hence ${\S}_Y = \bigcup_{y \in Y} {\S}_y\subseteq {\rm Span}_{\M}({\S}_v)$ and (c) follows.
\end{proof}

Now we can prove the main result.

\begin{proof}  {\it (of sufficiency in \ref{packing})} We start by proving the following claim.

\begin{claim}\label{arcs_are_good}
 If there is no bad arc then taking $|{\S}_v|$ times  each vertex $v$  gives an $\M$-basic packing of arborescences in $(D,{\S},\pi)$.
\end{claim}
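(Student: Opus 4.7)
The approach is to leverage the ``no bad arc'' hypothesis to show that at every vertex $v$ the set $\S_v$ is in fact a base of $\M$. Once this is established, the claim follows at once by taking $T_i$ to be the trivial arborescence consisting of the single vertex $\pi(\s_i)$ (and no arcs) for each $\s_i\in \S$: the $T_i$ are pairwise arc-disjoint (vacuously), each $T_i$ is rooted at $\pi(\s_i)$, and for every $v\in V$ the set $\{\s_i\in \S : v\in V(T_i)\}$ equals $\S_v$, which would then be a base of $\M$. Thus $\{T_1,\dots ,T_t\}$ is the desired $\M$-basic packing, and each vertex $v$ appears in exactly $|\S_v|=r_\M(\S)$ of these arborescences.

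\textbf{Key steps.} To establish that each $\S_v$ is a base, I would first observe that $V$ itself is tight, since $\rho_D(V)=0=r_\M(\S)-r_\M(\S_V)$. Fix $v\in V$ and let $Y$ be the set of vertices from which $v$ is reachable in $D$ using only good arcs, so $v\in Y\subseteq V$. By the hypothesis, every arc is good; hence an arc entering $Y$ from outside would extend good-reachability of $v$ to its tail, contradicting the definition of $Y$. Therefore $\rho_D(Y)=0$, and the $\M$-connectivity inequality (\ref{cond2}) applied to the non-empty set $Y$ yields $r_\M(\S_Y)=r_\M(\S)$. Next I would invoke \ref{tight}(c) with $X=V$ to conclude that $v$ dominates $Y$, i.e.\ $\S_Y\subseteq {\rm Span}_\M(\S_v)$. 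Combined with $r_\M(\S_Y)=r_\M(\S)$, this gives $r_\M(\S_v)=r_\M(\S)$, so $\S_v$ is a spanning set. Since $\pi$ is $\M$-independent, $\S_v$ is also independent, and hence a base of $\M$, as required.

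\textbf{Main obstacle.} There is essentially no obstacle: the argument is a direct combination of the maximal good-reachability construction of \ref{tight}(c) with the $\M$-connectivity bound applied to the ``closed'' set $Y$. The only point that requires a moment's thought is recognising that $V$ itself is tight so that \ref{tight}(c) can be invoked with $X=V$; once this is noted, the base property of each $\S_v$ falls out immediately and the packing is the trivial collection of isolated roots.
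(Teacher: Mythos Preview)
Your proof is correct and follows essentially the same approach as the paper's: both arguments show that for each vertex $v$ the good-reachability set $Y=Z_v$ satisfies $r_\M(\S_Y)=r_\M(\S)$ and then invoke \ref{tight}(c) (with $X=V$, which is tight) to conclude that $\S_v$ is a base. The only cosmetic difference is that you derive $\rho_D(Y)=0$ and apply (\ref{cond2}) directly, whereas the paper obtains the same conclusion by citing \ref{tight}(b).
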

\begin{proof}
For every vertex $v,$ let us denote by $Z_v$ the set of vertices  from which  $v$  is reachable in $D.$ 
Since $V$ is tight, \ref{tight}(b) implies that $Z_v$ dominates $V.$ Moreover, since every arc is good, by \ref{tight}(c), $v$ dominates ${Z_v}$ and hence, since $\pi$ is $\M$-independent, ${\S}_v$ is a base of $\M$ for all $v \in V$.
\end{proof}

We now prove the sufficiency by induction on $|A|$. If $A$ is empty, then there is no bad arc, and, by \ref{arcs_are_good}, the theorem is proved.

\medskip
 So we may assume that $A$ is not empty and there exists at least one bad arc.
\medskip

 For a bad arc $uv \in A$ and ${\s} \in {\S}_u \setminus Span(\S_v)$, let $D'=D-uv,$ ${\S}'$  the set obtained by adding a new element ${\s}'$ to ${\S}$,  $\M'$  the matroid on ${\S}'$ obtained from $\M$ by considering ${\s}'$ as an element parallel to ${\s}$ and $\pi'$  the placement of $\S'$ in $V$ obtained from $\pi$ by placing the new element ${\s}'$ at $v$.
 \medskip

By choice of $\s$, $\pi'$ is $\M'$-independent. If the  digraph with roots $(D',{\S}',\pi')$ is $\M'$-connected, then, by induction, there exists an $\M'$-basic packing $\P'$ of arborescences in $(D',{\S}',\pi')$. Since ${\s}$ and ${\s}'$ are parallel in $\M'$,   the arborescences $T$ and $T'$ of $\P'$  rooted at  ${\s}$ and ${\s}'$ are vertex disjoint, so $T'' = T \cup T' \cup uv$ is an arborescence rooted at ${\s}$. Then $(\P' \cup \{T''\} )\setminus \{T,T'\}$ is an $\M$-basic packing of arborescences in $(D,{\S},\pi)$. Hence the proof of the theorem is reduced to the proof of the following claim.
\medskip
\begin{claim} \label{nice_bad_arc}
 There exist a bad arc $uv$ and $\s \in \S_u \setminus Span(\S_v)$ such that $(D',{\S}',\pi')$ is $\M'$-connected.
\end{claim}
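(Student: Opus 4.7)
I plan to proceed by contradiction. Suppose that for every bad arc $uv$ and every $\s\in\S_u\setminus {\rm Span}_{\M}(\S_v)$ the digraph with roots $(D',\S',\pi')$ fails to be $\M'$-connected. A direct case analysis on how $\M'$-connectivity can fail (using that $r_{\M'}(\S')=r_{\M}(\S)$ and that $r_{\M'}(\S'_X)$ differs from $r_{\M}(\S_X)$ only when $v\in X$ and $\s\in {\rm Span}_{\M}(\S_X)$) reveals that any violating set must be a set $X\subseteq V$ that is tight in $(D,\S,\pi)$ with $u\notin X$, $v\in X$, and $\s\in {\rm Span}_{\M}(\S_X)$. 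By \ref{tight}(a), the family $\mathcal{F}(u,v):=\{X\text{ tight}:v\in X,\,u\notin X\}$ is closed under union (its members pairwise share $v$), so it admits a unique maximum element $X(u,v):=\bigcup\mathcal{F}(u,v)$, which is tight. The contradiction hypothesis thus rephrases as: for every bad arc $uv$, either $\mathcal{F}(u,v)=\emptyset$ (in which case any $\s$ works and we are done), or $\S_u\subseteq {\rm Span}_{\M}(\S_{X(u,v)})$.

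Choose a bad arc $u_0v_0$ so that $X^*:=X(u_0,v_0)$ is maximal for inclusion in $\{X(u,v):uv\text{ is bad}\}$. The first key step is to show that every arc $u'v'$ entering $X^*$ satisfies $\S_{u'}\subseteq {\rm Span}_{\M}(\S_{X^*})$. For good arcs this is the definition. For bad arcs, $X^*\in\mathcal{F}(u',v')$ forces $X^*\subseteq X(u',v')$; maximality of $X^*$ then gives $X(u',v')=X^*$, and the inclusion follows from the rephrased hypothesis.

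Let $U$ denote the set of tails of arcs entering $X^*$, and define $X_1:=X^*\cup U$. Then $r_{\M}(\S_{X_1})=r_{\M}(\S_{X^*})$; moreover, since $u_0v_0$ witnesses $\rho_D(X^*)\ge 1$ and $X^*$ is tight, $r_{\M}(\S_{X^*})<r_{\M}(\S)$, so $X_1\ne V$. Iterate by letting $X_{k+1}$ be $X_k$ together with the tails of arcs entering $X_k$. If the invariant $r_{\M}(\S_{X_k})=r_{\M}(\S_{X^*})$ can be maintained, then $(X_k)$ is a nondecreasing chain of proper subsets of $V$, so stabilizes at some $X_\infty$ into which no arc enters, contradicting the $\M$-connectivity inequality $\rho_D(X_\infty)\ge r_{\M}(\S)-r_{\M}(\S_{X^*})>0$.

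The main obstacle is the inductive step, which amounts to showing that every arc $uv$ entering $X_k$ has $\S_u\subseteq {\rm Span}_{\M}(\S_{X^*})$. Good arcs are handled by the inductive hypothesis applied to $v\in X_k$, and bad arcs with $v\in X^*$ are handled by the previous paragraph. For a bad arc $uv$ entering $X_k$ with $v\in X_k\setminus X^*$, one uncrosses $X(u,v)$ with $X^*$ using \ref{tight}(a): if $X(u,v)\cap X^*\ne\emptyset$, a short case split on whether $u_0\in X(u,v)$ produces a tight set in $\mathcal{F}(u,v)$ (when $u_0\in X(u,v)$) or in $\mathcal{F}(u_0,v_0)$ (when $u_0\notin X(u,v)$) strictly larger than $X^*$, contradicting maximality; hence $X(u,v)\cap X^*=\emptyset$. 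The delicate remaining subcase---disjoint tight sets $X(u,v)$ and $X^*$---is resolved by combining \ref{span_lemma} applied to ${\sf P}=\S_{X^*}$ and ${\sf Q}=\S_{X(u,v)}$ with the inductive hypothesis $\S_v\subseteq {\rm Span}_{\M}(\S_{X^*})$ and the arcs witnessing $v\in X_k\setminus X^*$ (which force the rank additivity needed to apply \ref{span_lemma}), yielding $\S_u\subseteq {\rm Span}_{\M}(\S_{X^*})$ and closing the induction.
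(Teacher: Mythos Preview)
Your opening analysis is correct: the contradiction hypothesis does reduce to ``for every bad arc $uv$, the set $X(u,v)$ is tight, contains $v$, avoids $u$, and satisfies $\S_u\subseteq{\rm Span}_\M(\S_{X(u,v)})$.'' Your first key step (every arc entering $X^*$ has its tail's roots spanned by $\S_{X^*}$) is also fine, and the case $X(u,v)\cap X^*\ne\emptyset$ in the inductive step is handled correctly.

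The gap is in the disjoint subcase. When $X(u,v)\cap X^*=\emptyset$ you invoke \ref{span_lemma} with ${\sf P}=\S_{X^*}$ and ${\sf Q}=\S_{X(u,v)}$, claiming this yields $\S_u\subseteq{\rm Span}_\M(\S_{X^*})$. But \ref{span_lemma} takes an element of ${\rm Span}_\M({\sf P})\cap{\rm Span}_\M({\sf Q})$ and places it in ${\rm Span}_\M({\sf P}\cap{\sf Q})$; it never produces membership in ${\rm Span}_\M({\sf P})$ from membership in ${\rm Span}_\M({\sf Q})$ alone. Here ${\sf P}\cap{\sf Q}=\emptyset$, so the lemma could only tell you that certain elements are loops, not that $\S_u\subseteq{\rm Span}_\M(\S_{X^*})$. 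Moreover, the ``rank additivity'' you need as a hypothesis of \ref{span_lemma} (namely $r_\M(\S_{X^*})+r_\M(\S_{X(u,v)})=r_\M(\S_{X^*}\cup\S_{X(u,v)})$) is not forced by the path of arcs witnessing $v\in X_k\setminus X^*$: those arcs give you $\S_v\subseteq{\rm Span}_\M(\S_{X^*})$, but disjoint tight sets need not be rank-additive in general, and nothing in your setup excludes this. So the inductive step does not close, and the growing-chain argument stalls.

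The paper avoids this obstruction by choosing the tight set \emph{minimal} rather than maximal. With $X$ minimal, one looks for a bad arc $u'v'$ \emph{inside} $D[X]$; the corresponding tight set $Y$ then automatically intersects $X$ (since $v'\in X\cap Y$), so \ref{tight}(a) and \ref{span_lemma} apply directly to produce the smaller tight set $X\cap Y$ dominating $u'$, contradicting minimality. The minimality choice is what guarantees the intersection and sidesteps exactly the disjoint case that breaks your argument.
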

\begin{proof} Assume that the claim is false. Let $uv\in A$ be a bad arc and $\s \in \S_u \setminus Span(\S_v)$, by assumption, there exists $\emptyset \neq X_{\s} \subset V$ such that $\rho_{D'}(X_{\s}) < r_{\M}(\S) - r_{\M'}({\S}'_{X_{\s}}).$
Hence, by (\ref{cond2}) and the monotonicity of $r_{\M'}$, $\rho_{D'}(X_{\s})+1\geq \rho_{D'}(X_{\s}) +\rho_{uv}(X_{\s}) = \rho_D(X_{\s}) \geq r_{\M}(\S) - r_{\M}({\S}_{X_{\s}})\geq r_{\M}(\S) - r_{\M'}({\S}'_{X_{\s}})\geq \rho_{D'}(X_{\s})+1,$ so equality holds everywhere and hence  $uv$ enters $X_{\s}$, $X_{\s}$ is tight and ${\s} \in {\rm Span}_{\M}({\S}_{X_{\s}})$. Hence, by \ref{tight}, $X=\cup_{\s \in \S_u \setminus Span(\S_v)}X_\s$ is tight and, by $v \in X$, $\S_u = (\S_u \setminus Span(\S_v)) \cup (\S_u \cap Span(\S_v)) \subseteq Span(\S_X) \cup Span(\S_X) = Span(\S_X)$. So we proved that
 \begin{eqnarray}
\textrm{  every bad arc } uv \textrm{ enters a tight set } X \textrm{  that dominates } u.\label{condviol2'}
 \end{eqnarray}

Among all pairs $(uv,X)$ satisfying (\ref{condviol2'}) choose one with $X$ minimal.  
\medskip

Suppose that every arc in $D[X]$ is good. Note that, by \ref{tight}(b) and the minimality of $X$, $v$ can be reached from all vertices of $X$  in $D[X]$. Then, by (\ref{condviol2'}), $X$ dominates  $u$ and, by \ref{tight}(c), $v$ dominates $X$ so $v$ dominates $u$ which contradicts the fact that $uv$ is bad. 
\medskip

Hence there exists a bad arc $u'v'$ in $D[X]$. Then, by (\ref{condviol2'}), $u'v'$ enters a tight set $Y$  that dominates $u'.$ By $v' \in X\cap Y$, the tightness of $X$ and $Y$, $u'\in X,$ ${\S}_{u'} \subseteq {\rm Span}_{\M}({\S}_Y)$, \ref{tight}(a) and \ref{span_lemma}, we have that $X \cap Y$ is tight and ${\S}_{u'} \subseteq {\rm Span}_{\M}({\S}_X \cap {\S}_Y) = {\rm Span}_{\M}({\S}_{X \cap Y})$. Since the bad arc $u'v'$ enters the tight set $X \cap Y$ that dominates $u'$ and $X\cap Y$ is a proper subset of $X$ (since $u'\in X\setminus Y$), this contradicts the minimality of $X$. 
\end{proof} \end{proof}

\section{Polyhedral aspects}

In this section we study a polyhedron describing the basic packings of arborescences. 
\medskip

We need the following general result of Frank \cite{frank2}.

\begin{theo}[Frank \cite{frank2}]\label{frank_theo2}
 Let $D=(V,A)$ be a digraph, $p:2^V \rightarrow \mathbb{Z}_+$ a non-negative intersecting supermodular set-function such that $\rho_D(Z) \geq p(Z)$ for every $Z \subseteq V$. Then the polyhedron defined by the following  linear system is integer:
\begin{eqnarray*}
1 \geq x(a) \geq 0 && \text{ for all } a \in A, \\
x(\varrho_D(X)) \geq p(X) && \text{ for all non-empty } X \subseteq V.
\end{eqnarray*}
\end{theo}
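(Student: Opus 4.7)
The plan is to show that every extreme point $x^*$ of the polyhedron is integer-valued. Put $A_0=\{a\in A: x^*(a)=0\}$, $A_1=\{a\in A: x^*(a)=1\}$, and let $\mathcal{F}=\{X\subseteq V: X\neq\emptyset,\ x^*(\varrho_D(X))=p(X)\}$ denote the family of tight sets. Since $x^*$ is an extreme point, it is the unique solution of a full-rank subsystem of tight constraints of the form $\{x(a)=0:a\in A_0\}\cup\{x(a)=1:a\in A_1\}\cup\{x(\varrho_D(X))=p(X):X\in\mathcal{F}'\}$ for some $\mathcal{F}'\subseteq\mathcal{F}$.

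The crux is an uncrossing lemma. For crossing $X,Y\in\mathcal{F}$, let $A_{XY}$ be the set of arcs with one endpoint in $X\setminus Y$ and the other in $Y\setminus X$. A case analysis on the positions of the two endpoints of an arc with respect to $X$ and $Y$ yields the pointwise identity
\[
x(\varrho_D(X))+x(\varrho_D(Y))=x(\varrho_D(X\cap Y))+x(\varrho_D(X\cup Y))+x(A_{XY}),\qquad\text{for every }x\in\mathbb{R}^A.
\]
Applying this to $x^*$ and combining tightness of $X$ and $Y$, primal feasibility at $X\cap Y$ and $X\cup Y$, intersecting supermodularity of $p$, and $x^*\geq 0$, I would derive
\[
p(X)+p(Y)=x^*(\varrho_D(X\cap Y))+x^*(\varrho_D(X\cup Y))+x^*(A_{XY})\geq p(X\cap Y)+p(X\cup Y)+x^*(A_{XY})\geq p(X)+p(Y)+x^*(A_{XY})\geq p(X)+p(Y),
\]
so equality holds throughout: $X\cap Y,X\cup Y\in\mathcal{F}$ and $A_{XY}\subseteq A_0$. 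Using this lemma, I would then show by a standard uncrossing argument that $\mathcal{F}'$ may be taken laminar: whenever two of its sets cross, the identity (with $x^*(A_{XY})=0$) expresses the equation for one of them as an integer combination of the equations for $X\cap Y$, $X\cup Y$, the other set, and the box equations, so the crossing pair can be replaced by a non-crossing one; termination follows, for instance, from the strictly convex potential $\sum_{X\in\mathcal{F}'}|X|^2$.

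Once the ``set part'' $\mathcal{F}'$ is laminar, I would finish by total unimodularity. The $\{0,1\}$-matrix $M$ with rows indexed by $\mathcal{F}'$ and columns by $A$, having entry $1$ at $(X,a)$ iff $a\in\varrho_D(X)$, is a network matrix: attach a virtual root above the Hasse forest of $\mathcal{F}'$, orient its edges from child to parent, and assign to each arc $uv\in A$ the upward tree-path from the smallest member of $\mathcal{F}'$ containing $v$ to the smallest member containing both $v$ and $u$; then $M_{X,a}=1$ exactly when the edge above $X$ lies on this (monotone-forward) path. Hence $M$ is totally unimodular by Tutte's theorem on network matrices, and together with the trivially totally unimodular box equations the whole coefficient matrix defining $x^*$ is totally unimodular; integrality of the right-hand side then forces $x^*$ to be integer. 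The main obstacle is the uncrossing step, in particular justifying that the replacement of crossing sets preserves the full rank of the defining system — a subtlety that rests on $A_{XY}\subseteq A_0$ being absorbed by the box equations and on a careful termination argument.
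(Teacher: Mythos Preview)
The paper does not prove this theorem at all: it is quoted from Frank's kernel-system paper~\cite{frank2} as a black box and then applied. So there is no paper-proof to compare your proposal against.

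That said, your sketch is the standard and essentially correct route to this result. The pointwise identity
\[
x(\varrho_D(X))+x(\varrho_D(Y))=x(\varrho_D(X\cap Y))+x(\varrho_D(X\cup Y))+x(A_{XY})
\]
is right, and together with intersecting supermodularity of $p$ and $x^*\ge 0$ it yields exactly what you claim: $X\cap Y,X\cup Y\in\mathcal F$ and $A_{XY}\subseteq A_0$. The replacement step is also fine once you note that the identity expresses $\chi_{\varrho_D(X)}$ as $\chi_{\varrho_D(X\cap Y)}+\chi_{\varrho_D(X\cup Y)}-\chi_{\varrho_D(Y)}+\chi_{A_{XY}}$, and the last term is an integer combination of box rows already in your system; hence the row space is preserved. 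For the endgame, a slightly cleaner way to state the integrality conclusion is: substitute out the coordinates in $A_0\cup A_1$ using the box equations; the remaining square system has coefficient matrix equal to a column-submatrix of your network matrix, hence totally unimodular with determinant $\pm 1$, and integer right-hand side $p(X)-|\varrho_D(X)\cap A_1|$, so the fractional coordinates of $x^*$ are forced to be integers. Your description of why the laminar incidence matrix is a network matrix (the sets of $\mathcal F'$ entered by a fixed arc $uv$ form a chain in the Hasse tree) is correct.
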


This following theorem is a corollary of \ref{packing} and \ref{frank_theo2}.

\begin{theo} \label{polyhedra_description} Let $(D=(V,A),{\S},\pi)$ be a digraph with roots and  $\M$  a matroid on ${\S}$  of rank $k$ with rank function $r_{\M}$. There exists an $\M$-basic packing of arborescences in $(D,{\S},\pi)$ if and only if the polyhedron $P_{\M,D}$ defined by the linear system
\begin{eqnarray}
1 \geq x(a) \geq 0 && \text{ for all } a \in A, \label{P_eq1}\\
x(\varrho_D(X)) \geq k - r_{\M}(\S_X) && \text{ for all non-empty } X \subseteq V, \label{P_eq2}\\
x(A) = k|V| - |\S| \label{P_eq3}
\end{eqnarray}
 is not empty. In this case, $P_{\M,D}$ is integer and its vertices are the characteristic vectors of the arc sets of the $\M$-basic packings of arborescences in $(D,\S,\pi)$.
\end{theo}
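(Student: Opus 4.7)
The plan is to deduce the equivalence from \ref{packing} and then to obtain the integrality of $P_{\M,D}$ as a face of the polyhedron delivered by Frank's Theorem~\ref{frank_theo2}.

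For the $(\Rightarrow)$ direction, given an $\M$-basic packing $\{T_1, \dots, T_t\}$ with characteristic vector $\chi$, I would verify (\ref{P_eq1}) from arc-disjointness, (\ref{P_eq2}) by exactly the computation already used in the necessity part of \ref{packing}, and (\ref{P_eq3}) by the count $\chi(A) = \sum_i(|V(T_i)|-1) = \sum_v|\{i : v \in V(T_i)\}| - |\S| = k|V| - |\S|$. For the $(\Leftarrow)$ direction, given $x \in P_{\M,D}$, the bound $x(a) \leq 1$ turns (\ref{P_eq2}) into $\rho_D(X) \geq k - r_\M(\S_X)$, so $(D,\S,\pi)$ is $\M$-connected. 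Summing (\ref{P_eq2}) over the singletons $X = \{v\}$ and using (\ref{P_eq3}) yields $\sum_v r_\M(\S_v) \geq |\S|$, which together with $r_\M(\S_v) \leq |\S_v|$ and $\sum_v |\S_v| = |\S|$ forces $r_\M(\S_v) = |\S_v|$ for every $v$, so $\pi$ is $\M$-independent and \ref{packing} produces the desired packing.

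For the polyhedral part, set $p(X) = k - r_\M(\S_X)$ for $\emptyset \neq X \subseteq V$ and $p(\emptyset) = 0$. Non-negativity is immediate from $r_\M(\S_X) \leq k$, and intersecting supermodularity follows from the submodularity of $r_\M$ via the identities $\S_{X \cap Y} = \S_X \cap \S_Y$ and $\S_{X \cup Y} = \S_X \cup \S_Y$. The $\M$-connectedness of $(D,\S,\pi)$ just derived gives $\rho_D(Z) \geq p(Z)$, so Frank's Theorem~\ref{frank_theo2} applies and the polyhedron $Q$ defined by (\ref{P_eq1}) and (\ref{P_eq2}) alone is integer. The main obstacle is then to recognise $P_{\M,D}$ as a \emph{face} of $Q$ rather than a generic slice: using the $\M$-independence of $\pi$ established above, every $x \in Q$ satisfies $x(A) = \sum_v x(\varrho_D(v)) \geq \sum_v(k - |\S_v|) = k|V| - |\S|$, so the hyperplane (\ref{P_eq3}) supports $Q$ from below and $P_{\M,D}$ is the face of $Q$ on which $x \mapsto x(A)$ attains its minimum; hence $P_{\M,D}$ is integer as well.

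Finally, since $P_{\M,D} \subseteq [0,1]^A$ is integer, its vertices are exactly its $\{0,1\}$-vectors. Given such a vertex $\chi$ and $A' = \{a \in A : \chi(a) = 1\}$, the subdigraph $(V,A')$ is $\M$-connected by reading (\ref{P_eq2}) for $\chi$, and $\pi$ is $\M$-independent, so \ref{packing} produces an $\M$-basic packing in $(V,A')$; by the arc-count of the first paragraph this packing has $|A'|$ arcs, so it uses every arc of $A'$, and $\chi$ is precisely its characteristic vector. The opposite inclusion was already verified in the $(\Rightarrow)$ step, finishing the proof.
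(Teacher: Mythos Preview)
Your proposal is correct and follows essentially the same approach as the paper: you derive the equivalence from \ref{packing}, obtain integrality of $P_{\M,D}$ by recognising it as a face of the polyhedron from \ref{frank_theo2}, and identify the vertices via \ref{packing} applied to the support subdigraph. The organisation differs slightly (you split off the $\M$-independence argument before invoking Frank's theorem, whereas the paper folds it into the chain of inequalities~(\ref{eqn:face})), and your arc count for~(\ref{P_eq3}) goes via $\sum_i(|V(T_i)|-1)$ rather than $\sum_v\rho_{A'}(v)$, but these are cosmetic differences.
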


\begin{proof} Suppose there exists an $\M$-basic packing of arborescences in $(D,\S,\pi)$ and call $A' \subseteq A$ its arc set. Let $x$ be the characteristic vector of $A'$. We have $x(A)=|A'| = \sum_{v \in V} \rho_{A'}(v) = \sum_{v \in V} (k-|\S_v|)  = k|V| - |\S|$ and $x(\varrho_D(X)) = \rho_{A'}(X) \geq k - r_{\M}(\S_X)$ for all non-empty $X \subseteq V$ by (\ref{cond2}). So $x \in P_{\M,D}$.

Now suppose that $P_{\M,D}$ is not empty. 
Since the function $k - r_{\M}(\S_X)$ is non-negative intersecting supermodular and, by (\ref{P_eq1}) and (\ref{P_eq2}),   $\rho_D(X) \geq k - r_{\M}(\S_X)$ for all non-empty $X \subseteq V$, \ref{frank_theo2} implies that the polyhedron $P$ described by (\ref{P_eq1}) and (\ref{P_eq2}) is integer. By (\ref{P_eq2}), for all $x \in P$, 
\begin{equation}
 \label{eqn:face}
 x(A) =\sum_{v \in V} x(\varrho_D(v)) \geq \sum_{v \in V} (k-r_{\M}(\S_v)) \geq \sum_{v \in V} (k-|\S_v|) = k|V| - |\S|,
\end{equation}

 \noindent that is, $x(A)\geq k|V|-|\S|$ is a valid inequality for $P$. Then, by (\ref{P_eq3}), $P_{\M,D}$ is a face of the integer polyhedron $P$ and hence $P_{\M,D}$ is also integer. Furthermore, for $x \in P_{\M,D}$, equality holds everywhere in (\ref{eqn:face}), thus, $|\S_v| = r_{\M}(\S_v)$ for all $v \in V$ and hence $\pi$ is $\M$-independent. A vertex $x$ of $P_{\M,D}$ defines an arc set $A'=\{a \in A, x(a)=1\}$. By (\ref{P_eq2}), the digraph with roots  $((V,A'),\S,\pi)$ is $\M$-connected. Therefore, by \ref{packing}, there exists an $\M$-basic packing of arborescences in $((V,A'),\S,\pi)$ whose arc set is, by (\ref{P_eq3}), equal to $A'$, and the theorem follows.
\end{proof}

\section{Algorithmic aspects}

We use the following theorem proved by Iwata, Fleischer and Fujishige \cite{Iwata} and independently by Schrijver \cite{schrijver}.
\begin{theo}[Iwata, Fleischer and Fujishige \cite{Iwata}, Schrijver \cite{schrijver}]\label{minimizing_sub}
A submodular function can be minimized in polynomial time. 
\end{theo}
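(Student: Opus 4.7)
The plan is to describe a combinatorial algorithm in the spirit of Schrijver's approach and to argue its polynomial runtime. The starting point is Edmonds' min--max theorem for polymatroids: if $f : 2^V \to \mathbb Z$ is submodular with $f(\emptyset)=0$, then the base polytope
\[ B(f) = \{x \in \mathbb R^V : x(V) = f(V),\ x(S) \le f(S) \text{ for all } S \subseteq V\} \]
is non-empty, and LP duality combined with the greedy solvability of linear optimisation over $B(f)$ yields
\[ \min_{S \subseteq V} f(S) \;=\; \max\{x^-(V) : x \in B(f)\}, \]
where $x^-(v) = \min(x(v),0)$. Moreover, from any optimal $x$ one recovers a minimiser $S^*$ as the set of vertices reachable from the negative support of $x$ in an auxiliary ``exchange'' digraph. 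So submodular function minimisation reduces to maximising $x^-(V)$ over $B(f)$.

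The next step is to maintain $x \in B(f)$ as an explicit convex combination $x = \sum_L \lambda_L b^L$ of extreme bases $b^L$ indexed by linear orderings $L$ of $V$, where $b^L$ is the greedy base for $L$, obtainable with $|V|$ calls to the oracle. Writing $P = \{v : x(v) > 0\}$ and $N = \{v : x(v) < 0\}$, I would build an auxiliary digraph whose arcs $(u,v)$ record that some ordering $L$ in the support of $x$ admits a swap that turns $b^L$ into a neighbouring extreme base moving mass from $u$ to $v$. If no directed $P$--$N$ path exists, a standard uncrossing argument on the tight sets of the current $x$ certifies optimality and delivers the minimiser. Otherwise, pushing flow along a shortest $P$--$N$ path and updating the convex combination accordingly strictly decreases $|x^-(V)|$, i.e. strictly improves the dual objective.

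The main obstacle, and the deep part of \cite{Iwata} and \cite{schrijver}, is bounding the number of augmenting steps by a polynomial in $|V|$ while using only a polynomial number of oracle calls. The standard device is a lexicographic potential: the vector of distance labels from $P$ in the auxiliary digraph, refined by a secondary potential (for instance the size of the support of $\lambda$ or a cardinality measure on the last-moved element). One shows this potential is monotone across augmentations and can assume only polynomially many values, so the total iteration count is polynomial; each iteration performs $O(|V|^2)$ elementary swap operations, each resolved by $O(|V|)$ greedy oracle queries. For the purposes of the present paper one only needs polynomial time (to supply a separation oracle for the system \eqref{P_eq1}--\eqref{P_eq3} via the inequality $x(\varrho_D(X)) + r_{\M}(\S_X) \ge k$, whose left-hand side is submodular in $X$), so the weakly polynomial variant already suffices, and the strongly polynomial refinement can be cited as a black box from \cite{Iwata,schrijver}.
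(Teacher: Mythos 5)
The paper does not prove this statement at all: it is quoted as a known result of Iwata--Fleischer--Fujishige and of Schrijver and used purely as a black box (to get a separation oracle and to test $\M$-connectivity). So there is no ``paper proof'' to match yours against; the only question is whether your write-up actually constitutes a proof. It does not. Your first two paragraphs are an accurate outline of the standard framework: the min--max relation $\min_S f(S) = \max\{x^-(V) : x \in B(f)\}$, the representation of $x$ as a convex combination of greedy extreme bases $b^L$, the exchange digraph, and augmentation along $P$--$N$ paths. But the entire mathematical content of the theorem is the bound on the number of iterations and oracle calls, and on this point you write that it is ``the main obstacle, and the deep part'' and then describe it only as ``a lexicographic potential \dots one shows this potential is monotone,'' before proposing to cite the result ``as a black box from \cite{Iwata,schrijver}.'' That is circular: you are invoking the theorem to prove the theorem. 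Without a concrete potential function and a proof that it is monotone and polynomially bounded, nothing is established, since the naive augmenting scheme can a priori take exponentially many steps and blow up the support of the convex combination.

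Two further inaccuracies sit exactly where the difficulty lies. First, in Schrijver's algorithm a single augmentation does \emph{not} in general strictly increase the dual objective $x^-(V)$ by a uniform amount; the termination argument is a combinatorial potential on the linear orders in the support of $\lambda$ (e.g.\ Vygen's analysis), not monotone improvement of $x^-(V)$. The variant in which $x^-(V)$ increases by a guaranteed $\delta$ per phase is the Iwata--Fleischer--Fujishige \emph{scaling} algorithm, which you do not describe. Second, recovering the minimiser requires care with reachability direction (the minimiser is the set of elements from which $N$ is reachable, equivalently not reaching $P$, when no augmenting path exists), and the uncrossing of tight sets must be spelled out. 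Since the paper only needs this theorem as an oracle, the honest course -- which is what the authors take -- is to cite it without proof; presenting a sketch that defers its essential step back to the same citations should not be passed off as a proof.
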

 
 In this section we assume that a matroid is given by an oracle for the rank function.
 The following theorem is a corollary of \ref{minimizing_sub} and \ref{packing}.
\begin{theo} \label{packing_polytime}
 Let $(D,{\S},\pi)$ be a digraph with roots and $\M$  a matroid on ${\S}$. An $\M$-basic packing of arborescences in $(D,{\S},\pi)$ or a vertex $v$ certifying that  $\pi$ is not $\M$-independent or a vertex set $X$ certifying that $(D,\S,\pi)$ is not $\M$-connected can be found in polynomial time.
\end{theo}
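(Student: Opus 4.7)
The plan is to run three phases: (i) verify that $\pi$ is $\M$-independent, (ii) verify that $(D,\S,\pi)$ is $\M$-connected, and (iii) if both tests pass, construct an $\M$-basic packing by mimicking the inductive proof of \ref{packing}. Phases (i) and (ii) return the desired certificate when they fail. For (i), I query the rank oracle for every $v\in V$ and declare $v$ a certificate whenever $r_\M(\S_v)<|\S_v|$.

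Phase (ii) rests on the observation that the set-function $h(X):=\rho_D(X)+r_\M(\S_X)$ is submodular on $2^V$. Indeed, $\rho_D$ is submodular, and since $\S_X=\pi^{-1}(X)$ commutes with unions and intersections (so $\S_{X\cap Y}=\S_X\cap\S_Y$ and $\S_{X\cup Y}=\S_X\cup\S_Y$), submodularity of $r_\M$ on $2^{\S}$ transports to submodularity of $X\mapsto r_\M(\S_X)$. Thus $(D,\S,\pi)$ is $\M$-connected if and only if $\min_{\emptyset\neq X\subseteq V}h(X)\geq r_\M(\S)$. To minimize $h$ over non-empty sets, for each $v\in V$ I minimize the submodular function $Y\mapsto h(Y\cup\{v\})$ on $2^{V\setminus\{v\}}$ using \ref{minimizing_sub} and take the smallest value encountered; if it is less than $r_\M(\S)$, the corresponding minimizer is returned as a certificate.

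Phase (iii) follows the sufficiency proof of \ref{packing}. Call an arc $uv$ bad if $\S_u\not\subseteq{\rm Span}_\M(\S_v)$; this is tested by one rank-oracle call per arc. If no bad arc exists, \ref{arcs_are_good} immediately produces the packing. Otherwise I enumerate all pairs $(uv,\s)$ with $uv$ bad and $\s\in\S_u\setminus{\rm Span}_\M(\S_v)$; for each such pair I build the reduced instance $(D',\S',\pi',\M')$ exactly as in the proof of \ref{packing}, where a rank oracle for $\M'$ is obtained from that of $\M$ by identifying $\s'$ with $\s$, and I rerun Phase (ii) on it. \ref{nice_bad_arc} guarantees that at least one such pair yields an $\M'$-connected instance; I pick one, recurse to obtain a packing $\P'$, then return $(\P'\setminus\{T,T'\})\cup\{T\cup T'\cup uv\}$, where $T,T'$ are the arborescences of $\P'$ rooted at $\s,\s'$.

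Each recursive call decreases $|A|$ by one, and at each level at most $O(|A|\cdot|\S|)$ candidate pairs are tested with a single submodular minimization apiece, so the overall running time is polynomial. The step I expect to require the most care is verifying that the submodular-minimization primitive is applied to functions that are genuinely submodular on the correct ground set (in particular the composed function $h$ and the restrictions used to force non-emptiness), together with checking that $\M'$-independence of $\pi'$ is automatic from the choice $\s\in\S_u\setminus{\rm Span}_\M(\S_v)$, so that no additional oracle work is needed in the recursion.
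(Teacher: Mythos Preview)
Your proposal is correct and follows essentially the same approach as the paper: both check $\M$-independence and $\M$-connectivity via submodular minimization of $\rho_D(X)+r_\M(\S_X)$, then construct the packing by algorithmically tracking the inductive proof of \ref{packing}, testing each bad-arc/root pair $(uv,\s)$ and recursing on the reduced instance guaranteed by \ref{nice_bad_arc}. You supply slightly more detail than the paper (the trick of fixing $v\in X$ to enforce $X\neq\emptyset$, the explicit runtime bound, and the remark that $\pi'$ is automatically $\M'$-independent), but the strategy is identical.
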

\begin{proof}
 By the submodularity of $\rho_D(X) + r_{\M}(\S_X)$, \ref{minimizing_sub}, using the oracle on $\M$ and \ref{packing}, we can either find a set violating \eqref{cond2} or a vertex certifying that $\pi$ is not $\M$-independent or certify that there exists an $\M$-basic packing of arborescences.

 In the latter case, an $\M$-basic packing of arborescences can be found in polynomial time following the proof of \ref{packing}. Using the oracle, test whether each arc is good or bad. When an arc $uv$ is bad, for each $s \in \S_u \setminus Span(\S_v)$, determine in polynomial time whether $D'$ is $\M'$-connected using the submodularity of $\rho_{D'}(X) + r_{\M'}(\S'_X)$, the oracle for the rank function  $r_{\M'}$ (that is easily computed from  $r_{\M}$) and \ref{minimizing_sub}. Either all arcs are good or we find a bad arc  $uv$ and $s \in \S_u \setminus Span(\S_v)$ satisfying \ref{nice_bad_arc}. In the first case, by \ref{arcs_are_good}, the required packing is found. In the second case, it leads to the computation of an $\M'$-basic packing in the digraph with roots $(D',S',\pi')$ which contains less arcs than $D$.
\end{proof}

By the submodularity of $x(\varrho_D(X)) + r_{\M}(\S_X)$ and \ref{minimizing_sub}, $P_{\M,D}$ can be separated in polynomial time.
Thus, using the ellipsoid method, by Gr\"otschel, Lov\'asz and Schrijver \cite{separation}, and by \ref{packing_polytime}, we have the following result.
\begin{theo}
 Let $(D,{\S},\pi)$ be a digraph with roots,  $\M$  a matroid on ${\S}$ and $c$ a cost function on the set of arcs of $D.$ If there exists an $\M$-basic packing of arborescences in $(D,{\S},\pi)$ then one of minimum cost can be found in polynomial time.
\end{theo}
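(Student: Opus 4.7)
The plan is to optimize the linear function $c^{\top}x$ over the polyhedron $P_{\M,D}$ defined in \ref{polyhedra_description}. By that theorem, whenever an $\M$-basic packing of arborescences exists, $P_{\M,D}$ is non-empty and integer, and its vertices are precisely the characteristic vectors of the arc sets of the $\M$-basic packings of arborescences in $(D,\S,\pi)$. Hence solving $\min\{c^{\top}x : x \in P_{\M,D}\}$ and recovering an optimal vertex yields a minimum cost $\M$-basic packing.

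To apply the ellipsoid method of Gr\"otschel, Lov\'asz and Schrijver \cite{separation}, the main task is to exhibit a polynomial-time separation oracle for $P_{\M,D}$. The box constraints $0 \leq x(a) \leq 1$ and the single linear equation $x(A) = k|V| - |\S|$ can be checked directly in linear time. The only non-trivial family of constraints is $x(\varrho_D(X)) \geq k - r_\M(\S_X)$ for non-empty $X \subseteq V$. Given a candidate $x$ satisfying the box constraints, these constraints hold for every non-empty $X$ if and only if
\[
\min_{\emptyset \neq X \subseteq V}\bigl(x(\varrho_D(X)) + r_\M(\S_X)\bigr) \geq k.
\]
The set-function $X \mapsto x(\varrho_D(X)) + r_\M(\S_X)$ is submodular: the first term is submodular because $x$ is non-negative and $\rho_D$ is submodular, and the second is submodular because $r_\M$ is submodular and $X \mapsto \S_X$ commutes with unions and intersections. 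Using the oracle for $r_\M$, this function can be evaluated in polynomial time, so by \ref{minimizing_sub} its minimum over non-empty subsets of $V$ can be computed in polynomial time, producing either a certificate that $x \in P_{\M,D}$ or a violated set $X$.

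With this separation oracle, the ellipsoid method computes in polynomial time an optimal vertex of $P_{\M,D}$ with respect to $c$; since $P_{\M,D}$ is integer, this vertex is $0/1$-valued and, by \ref{polyhedra_description}, corresponds to the arc set of an $\M$-basic packing of arborescences of minimum cost. Note that the hypothesis that some $\M$-basic packing exists guarantees $P_{\M,D} \neq \emptyset$, so the LP has a finite optimum; alternatively, existence can first be certified by \ref{packing_polytime}. The main technical point is the submodularity of $x(\varrho_D(X)) + r_\M(\S_X)$, which is exactly the ingredient already exploited in \ref{packing_polytime} and reduces separation to submodular function minimization.
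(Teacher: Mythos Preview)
Your argument is correct and follows exactly the paper's route: separate over $P_{\M,D}$ by minimizing the submodular function $X\mapsto x(\varrho_D(X)) + r_\M(\S_X)$ via \ref{minimizing_sub}, then optimize with the ellipsoid method of \cite{separation}. One small point you gloss over but the paper makes explicit through its reference to \ref{packing_polytime}: an optimal vertex of $P_{\M,D}$ only gives the \emph{arc set} $A'$ of a minimum-cost packing, not the packing itself; to output the actual arborescences you still need to run the algorithm of \ref{packing_polytime} on $((V,A'),\S,\pi)$.
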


We conclude this section with algoritmic remarks on the undirected case. Let $(G,{\S},\pi)$ be a graph with roots and $\M$  a matroid on $\S$. Katoh and Tanigawa \cite{Kato_Tanigawa} designed a combinatorial algorithm to decide in polynomial time whether $(G,\S,\pi)$ admits an $\M$-basic packing of rooted trees such that the edge set of $G$ is the union of the edge sets of the trees in the packing and, if it does, find the decomposition. As far as we know, their algorithm does not find an $\M$-basic packing of rooted-trees in $(G,\S,\pi)$ in the general case (where the condition on the edges is deleted). However, our approach gives a polynomial time algorithm to solve this problem. Indeed, if $(G,\S,\pi)$ is $\M$-partition connected, then an orientation $D$ of $G$ such that  $(D,\S,\pi)$ is $\M$-connected can be found in polynomial time using submodular flows \cite{frank_submodular}. By \ref{packing_polytime}, an $\M$-basic packing of arborescences of $(D,\S,\pi)$, and hence an $\M$-basic packing of rooted trees of $(G,\S,\pi)$, can be found in polynomial time.


\section{Final remarks} 

We finish the paper with a related problem.  
Given   a digraph with roots $(D,\S,\pi)$,  a matroid $\M$ on ${\S}$ with rank function $r_{\M}$ and a bound $b:V\rightarrow \mathbb Z$, an \emph{$(\M,b)$-packing of arborescences} is a set  $\{T_1,\dots ,T_t\}$ of pairwise arc-disjoint arborescences  such that   $T_i$ is rooted at ${\s}_i \in {\S}$  for  $i=1,\dots ,t$ and  $r_{\M}(\{{\s}_i \in {\S} : v \in V(T_i)\}) \geq b(v)$ for all $v \in V$. When $b$ is constant, using \ref{packing} and matroid truncation, one can derive a characterization of digraphs with roots admitting an $(\M,b)$-packing of arborescences.
On the other hand, for general $b$, the problem turns out to be NP-complete since it contains the disjoint Steiner arborescences problem that is to find $2$ arc-disjoint arborescences both rooted at the same  vertex and both covering a specified subset of vertices.


\section{Acknowledgement}

We are grateful to Andr\'as Frank for some remarks on an earlier version of the paper and 
to Yohann Benchetrit for his invaluable discussion on the polyhedral aspects.


\end{document}